\documentclass[sn-mathphys-num]{sn-jnl}

\usepackage{graphicx}%
\usepackage{multirow}%
\usepackage{amsmath,amssymb,amsfonts}%
\usepackage{amsthm}%
\usepackage{mathrsfs}%
\usepackage[title]{appendix}%
\usepackage{xcolor}%
\usepackage{textcomp}%
\usepackage{manyfoot}%
\usepackage{booktabs}%
\usepackage{algorithm}%
\usepackage{algorithmicx}%
\usepackage{algpseudocode}%
\usepackage{listings}%
\usepackage{mathtools}
\usepackage{tikz}
\usepackage{amscd} 
\usetikzlibrary{cd}
\mathtoolsset{showonlyrefs}
\usetikzlibrary{calc}
\numberwithin{equation}{section}

\theoremstyle{thmstyleone}%
\newtheorem{theorem}{Theorem}[section]
\newtheorem{proposition}[theorem]{Proposition}%
\newtheorem{lemma}[theorem]{Lemma}%
\newtheorem{cor}[theorem]{Corollary}

\theoremstyle{definition}%
\newtheorem{definition}[theorem]{Definition}%
\newtheorem{example}[theorem]{Example}%
\newtheorem{remark}[theorem]{Remark}%

\def\vmf{\mathcal{M}_{\mathrm{vMF}}^n}
\def\econ{\nabla^{(\mathrm{e})}}
\def\mcon{\nabla^{(\mathrm{m})}}

\begin{document}

\title[Dually flat structure of von Mises-Fisher distributions]{Dually flat structure of von Mises-Fisher distributions}


\author{\fnm{Behzad} \sur{Najafi}, \fnm{Morteza} \sur{Sharifi-Najjar}, \fnm{Ryu} \fnm{Ueno}}


\abstract{The von Mises--Fisher (vMF) distribution is the natural exponential family analogue of the normal distribution on the unit hypersphere.
    We bring together previous results of its dually flat structure and establish new results on the information geometry of the family of vMF distributions.
    In particular, we characterize the exponential and mixture autoparallel submanifolds, and give the classification of doubly autoparallel submanifolds.
    Furthermore, we derive explicit formulas for the unique minimizers of the Kullback--Leibler divergence on these submanifolds and establish the corresponding projection theorems.}

\keywords{Information geometry, von Mises--Fisher distribution, dually~flat~structure, directional statistics}

\pacs[MSC Classification]{53B12, 62H11}

\maketitle

\section{Introduction}\label{sec1}

We present a unified treatment of the dually flat structure of the \textit{von Mises--Fisher} (vMF) distribution and establish new results.
In particular, we characterize the submanifolds that are autoparallel with respect to both the exponential and mixture connections and establish explicit projection theorems for the Kullback--Leibler (KL) divergence onto these submanifolds.\\

Information geometry equips the manifold of probability distributions with a Riemannian structure through the Fisher information matrix. 
This simple idea has proven remarkably powerful: it allows one to bring the tools of differential geometry to bear on statistical inference, information theory, and machine learning \cite{Amari2000,Rao1945,Cover1999}.
In directional statistics, the vMF distribution \cite{Fisher1953,Mardia1999} plays a role analogous to that of the normal distribution on the hypersphere.
The vMF distribution also arises naturally as the conditional distribution of an isotropic multivariate normal restricted to the unit sphere \cite{Mardia1999}, and it is widely used in bioinformatics, geology, text mining, and machine learning on hyperspherical embeddings \cite{Dhillon2003,Banerjee2005}.

The Fisher information geometry of the vMF distribution has been studied from several directions. 
Hornik and Gr\"{u}n \cite{Hornik2013} derived the Fisher information matrix in the context of conjugate families and Jeffreys priors. 
Said, Bombrun, and Berthoumieu \cite{SaidBombrunBerthoumieu2019} proved a general theorem showing that the Fisher metrics of location-scale models on symmetric spaces, including the vMF model, are warped Riemannian metrics. \\

In this paper, we begin by developing the dually flat structure of the \textit{vMF manifold} (family of vMF distributions) and computing the Fisher metric and the Amari--Chentsov tensor. 
From this we characterize the exponential and mixture autoparallel submanifolds, and we show that the doubly autoparallel submanifolds are exactly the restrictions of the vMF manifold to linear subspaces (Theorem~\ref{daautoparallel}). 
On these submanifolds, we obatain an explicit form of the projection theorems of the KL divergence.\\

The paper is organized as follows. 
Section~\ref{sec2} reviews the preliminaries on information geometry, including dualistic structures, submanifolds, and divergences. 
Section~\ref{sec:vMF} develops the dually flat structure of the vMF manifold, computes its Fisher metric and Amari--Chentsov tensor, and derives the canonical divergence, an the canonical radial coordinate.
Section~\ref{sec:submanifolds} studies submanifolds of the vMF manifold, characterizing the autoparallel and doubly autoparallel cases and establishing projection theorems for the KL divergence.

\section{Preliminaries}\label{sec2}

In this section, we review the general theory of information geometry, which is the study of dualistic structures on a family of probability distributions.
The exposition in this section follows \cite{Amari2016}.
All objects in this paper are assumed to be smooth.
For a smooth manifold $M$, we denote by $\Gamma(E)$ the set of sections of a vector bundle $E$ over $M$.
In particular, we denote by $T^{(p,q)}M$ the tensor bundle of type $(p,q)$.

\subsection{Dualistic structure}
Let $\mathbb{R}^{n}$ be an affine space with the coordinate system $\theta = (\theta^1,\ldots,\theta^n)$.
We denote by $D$ the standard flat connection on $\mathbb{R}^n$, that is, $Dd\theta^i=0$.\\

\begin{definition}
	Let $M$ be a manifold with the global chart $\Theta\subset\mathbb{R}^n$ which is a convex open set, and $g$ a Riemannian metric on $M$.
	The triplet $(M,D,g)$ is called a \textit{Hessian manifold} if there exists a smooth function $\psi\in C^{\infty}(\Theta)$ such that $g=Dd\Psi$ where $\Psi = \psi\circ\theta$.\\
\end{definition}

The function $\Psi$ is called a \textit{potential} of $g$ with respect to $D$.
Since we have $Dg=DDd\Psi$, the affine connection $D$ is not necessarily the Levi-Civita connection of $g$.
Consequently, we obtain another affine connection $D^*$ on $M$ defined as follows.\\
\begin{definition}
	The \textit{dual connection} $D^*$ on a manifold $M$ of an affine connection $D$ with respect to a Riemannian metric $g$ is defined by
	\begin{align*}
		Xg(Y,Z) = g(D_X Y,Z) + g(Y,D^*_X Z),\quad X,Y,Z\in\Gamma(TM).
	\end{align*}
	We often call $(g,D,D^*)$ a \textit{dualistic structure} and denote by $(M,g,D,D^*)$ the manifold endowed with the dualistic structure.\\
\end{definition}

For a Hessian manifold $(M,D,g=Dd\Psi)$, its dual connection $D^*$ is also a flat connection.
In this case, we often call the dualistic structure $(g,D,D^*)$ a \textit{dually flat structure}.
An affine coordinate system $(\eta_1,\ldots,\eta_n)$ of $D^*$ is obtained by 
\begin{align}
	\label{expcor}
	\eta_i(x) = \frac{\partial \psi}{\partial \theta^i}(\theta(x)),\quad i=1,\ldots,n,\quad x\in M.
\end{align}
The coordinate system $(\eta_1,\ldots,\eta_n)$ is defined globally by the convexity of $\Theta$.
The two coordinate systems satisfy
\begin{align}
	\label{dualness}
	g\left(\frac{\partial}{\partial\theta^i},\frac{\partial}{\partial\eta_j}\right) = \delta_{ij}.
\end{align}
A potential of $g$ with respect to $D^*$ is obtained by 
\begin{align}
	\label{exppot}
	\varPhi(x) = \sum_{i=1}^n\theta^i(x)\frac{\partial\psi}{\partial \theta^i}(\theta(x)) - \psi(\theta(x)),\quad x\in M.
\end{align}
Therefore, if the range of $(\eta_1,\ldots,\eta_n)$ is convex, $(M,D^*,g)$ is also a Hessian manifold.\\

\begin{example}
	\label{expo}
	Let $\Omega\subset\mathbb{R}^k$ and $M=\{p_{\theta}\mid \theta\in\Theta\}$ be a family of probability distributions over $\Omega$ parametrized by a convex open set $\Theta\subset\mathbb{R}^n$.
	The family $M$ is called an exponential family if there exist functions $C,F_1,\ldots,F_n$ on $\Omega$ and $\psi\in C^{\infty}(\Theta)$ such that each $p_{\theta},\,\theta\in\Theta$ is expressed by
	\begin{align*}
		p_{\theta}(\omega) = \exp\left(C(\omega) - \sum_{i=1}^nF_i(\omega)\theta^i - \psi(\theta)\right),\quad \omega\in\Omega.
	\end{align*}
	The function $\psi$ is called the \textit{cumulant generating function}, which adjusts $p_{\theta}$ to be a probability distribution.
	We call $\theta=(\theta^1,\ldots,\theta^n)$ the \textit{natural coordinate} of $M$, and the standard affine connection $\nabla^{\mathrm{(e)}}$ on $\theta-$coordinate is called the \textit{exponential connection} of $M$.
	The \textit{Fisher metric} is defined on $M$ using the expected value by 
	\begin{align}
		g^F_{p_{\theta}}(X,Y) = \mathbb{E}_{p_{\theta}}[(X\log p_{\theta})(Y\log p_{\theta})],\quad X,Y\in\Gamma(TM),
	\end{align}
	which is equal to $g^F=\nabla^{\mathrm{(e)}}d\Psi$, where $\Psi = \psi\circ\theta$.
	We only consider the case when the Hessian of $\psi$ is positive definite so that $g^F$ is a Riemannian metric.
	The \textit{Amari--Chentsov} tensor is defined on $M$ by
	\begin{align}
		C^{AC}_{p_{\theta}}(X,Y,Z) = \mathbb{E}_{p_{\theta}}[(X\log p_{\theta})(Y\log p_{\theta})(Z\log p_{\theta})],\quad X,Y,Z\in\Gamma(TM),
	\end{align}
	which is just equal to $C^{AC} = \nabla^{\mathrm{(e)}}g^F$.
	The dual connection $\nabla^{(\mathrm{m})}$ of $(M,\nabla^{\mathrm{(e)}},g^F)$ is called the \textit{mixture connection} of $M$.
	Lastly, the coordinate system $\eta$ obtained by \eqref{expcor} is called the \textit{expectation coordinates}.
	Examples of exponential families that appear in information geometry are the probability simplex, normal distribution families, etc.\\
\end{example}

\begin{definition}
	Let $(g,D,D^*)$ be a dualistic structure on a manifold $M$.
	The \textit{difference tensor} $K$ is defined by 
	\begin{align}
		K_X Y = D_X Y - \nabla^g_X Y,\quad X,Y\in\Gamma(TM),
	\end{align}
	where $\nabla^g$ is the Levi-Civita connection of $g$.\\
\end{definition}

We recall the following well-known relations  from \cite{Amari2000}.\\
\begin{proposition}
    \label{difften}
    Let $(M, g, D, D^*)$ be a manifold equipped with a dually flat structure and difference tensor $K$. 
    For any vector fields $X, Y, Z \in \Gamma(TM)$, the following identities hold$\mathrm{:}$
    \renewcommand{\labelenumi}{$\operatorname{(\theenumi)}$}
    \begin{enumerate}
        \item $2\nabla^g_X Y = D_X Y + D^*_X Y$
        \item $2K_X Y = D_X Y - D^*_X Y$
        \item $-2g(K_X Y, Z) = (D_X g)(Y, Z)$\\
    \end{enumerate}
\end{proposition}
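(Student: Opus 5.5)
The three identities follow from the definition of the dual connection and the torsion-freeness of the flat connections involved; the dually flat hypothesis is only used to guarantee that $D$ and $D^*$ are torsion-free. The plan is to prove (1) first, since (2) and (3) come from it by short algebra.

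For (1), I will set $\widetilde\nabla = \tfrac12(D + D^*)$. This is an affine connection, because an affine combination of connections whose coefficients sum to one is again a connection. It is torsion-free because both $D$ and $D^*$ are flat, hence torsion-free: $D$ is the coordinate connection of $\theta$, and $D^*$ is the coordinate connection of $\eta$. It is metric, as one sees by averaging the defining identity
\[
Xg(Y,Z) = g(D_XY,Z) + g(Y,D^*_XZ)
\]
with the same identity after swapping $Y$ and $Z$, using the symmetry of $g$. This gives
\[
Xg(Y,Z) = g(\widetilde\nabla_XY,Z) + g(Y,\widetilde\nabla_XZ).
\]
By uniqueness of the Levi-Civita connection, $\widetilde\nabla = \nabla^g$. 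The one point needing care is torsion-freeness of $D^*$. I will justify it by noting that the $\eta$-coordinates of \eqref{expcor} are affine for $D^*$, so $D^*$ has vanishing Christoffel symbols in a chart and is torsion-free.

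For (2), substituting $\nabla^g = \tfrac12(D + D^*)$ from (1) into $K_XY = D_XY - \nabla^g_XY$ gives
\[
K_XY = \tfrac12\,(D_XY - D^*_XY),
\]
which is the claim.

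For (3), I will expand the covariant derivative of the metric:
\[
(D_Xg)(Y,Z) = Xg(Y,Z) - g(D_XY,Z) - g(Y,D_XZ).
\]
Replacing $Xg(Y,Z)$ by the duality relation turns this into
\[
(D_Xg)(Y,Z) = g(Y, D^*_XZ - D_XZ) = -2\,g(Y, K_XZ)
\]
by (2). It remains to show $g(Y,K_XZ) = g(K_XY,Z)$. Two symmetry facts give this. First, $K$ is symmetric in its two arguments: $K_XY - K_YX$ is the difference of the torsions of $D$ and $\nabla^g$, which is zero. Second, $(D_Xg)(Y,Z)$ is totally symmetric. Symmetry in $Y,Z$ is automatic, and symmetry in $X,Y$ is the Codazzi property $(D_Xg)(Y,Z) = (D_Yg)(X,Z)$. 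This holds because $g = Dd\Psi$ with $D$ flat, so in $\theta$-coordinates $(D_{\partial_i} g)_{jk} = \partial_i\partial_j\partial_k\psi$, which is totally symmetric. Combining these,
\[
g(Y,K_XZ) = g(Y,K_ZX) = -\tfrac12 (D_Zg)(X,Y) = -\tfrac12 (D_Xg)(Z,Y) = g(K_XY,Z),
\]
where the last step applies the formula just derived with the roles of $Y$ and $Z$ exchanged. This establishes (3).

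The main obstacle is this final symmetry step. It is the only place where the Hessian structure, and not just the dualistic structure, is genuinely used: it requires the total symmetry of the cubic form $Dg$, equivalently that $D^*$ is torsion-free.
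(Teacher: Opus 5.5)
Your argument is correct. The paper gives no proof of this proposition: it records the three identities as well known and cites \cite{Amari2000}. What you wrote is the standard derivation: identify $\tfrac12(D+D^*)$ as the Levi-Civita connection by checking that it is metric and torsion-free, read off (2), and obtain (3) from the duality relation.

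I have one simplification and one correction to your commentary.

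\textbf{Simplification.} Step (3) needs neither the symmetry of $K$ nor the Codazzi property. Once you have $(D_Xg)(Y,Z)=-2g(Y,K_XZ)$ for all $X,Y,Z$, apply it with $Y$ and $Z$ interchanged. Then use only that $D_Xg$ is symmetric in its two slots, which holds because $g$ is symmetric:
\[
(D_Xg)(Y,Z)=(D_Xg)(Z,Y)=-2g(Z,K_XY)=-2g(K_XY,Z).
\]
This is exactly the last link of your chain, and it suffices on its own.

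\textbf{Correction.} Your closing remark that the symmetry step in (3) is ``the only place'' the Hessian structure is genuinely used is inaccurate. Given (2), step (3) is pure algebra. The Hessian structure is consumed in (1), through the torsion-freeness of $D^*$. As you note yourself, this is equivalent to the total symmetry of $Dg=DDd\Psi$. It is also equivalent to the $\eta$-coordinates being $D^*$-affine, which rests on $g(\partial/\partial\theta^i,\partial/\partial\eta_j)=\delta_{ij}$ coming from $g=Dd\Psi$.

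That is where the hypothesis really matters. For a torsion-free $D$ whose $g$-dual is not torsion-free, $\tfrac12(D+D^*)$ is still metric but has torsion $\tfrac12 T^{D^*}$, so (1) fails.
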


\begin{definition}
    Let $(g,D,D^*)$ be a dualistic structure on a manifold $M$ with $K$ the difference tensor.
    The \textit{$\alpha$-connections} $\nabla^{(\alpha)}$ of $(g,D,D^*)$ are defined by
    \begin{align}
        \label{alphacon}
        \nabla^{(\alpha)}_X Y = \nabla^g_X Y + \alpha K_X Y,\quad X,Y\in\Gamma(TM)
    \end{align}
    for each $\alpha\in\mathbb{R}$.\\
\end{definition}

\begin{proposition}\cite{Amari2000}
    The $\alpha$-connections on a dualistic structure $(g,D,D^*)$ have the following properties$\mathrm{:}$
    \renewcommand{\labelenumi}{$\operatorname{(\theenumi)}$}
    \begin{enumerate}
        \item $2\nabla^{(\alpha)}_X Y = (1+\alpha)D_X Y + (1-\alpha)D^*_X Y$,
        \item $(\nabla^{(\alpha)})^*=\nabla^{(-\alpha)}$,
        \item $(\nabla^{(\alpha)}_X g)(Y,Z) = \alpha(D_X g)(Y,Z)$,
    \end{enumerate}
    where $X,Y,Z\in\Gamma(TM)$.
    In particular, $\nabla^{(1)}=D$, $\nabla^{(-1)} = D^*$, and $\nabla^{(0)}= \nabla^g$.\\
\end{proposition}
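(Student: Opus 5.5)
The plan is to prove the three items directly from definition \eqref{alphacon} together with Proposition~\ref{difften}. Fix $X,Y,Z\in\Gamma(TM)$ and write $\nabla^{(\alpha)}_XY=\nabla^g_XY+\alpha K_XY$.

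For item (1), substitute $2\nabla^g_XY=D_XY+D^*_XY$ and $2K_XY=D_XY-D^*_XY$ from Proposition~\ref{difften}(1),(2) into $2\nabla^{(\alpha)}_XY$. Collecting terms gives
\begin{align*}
2\nabla^{(\alpha)}_XY=(1+\alpha)D_XY+(1-\alpha)D^*_XY
\end{align*}
immediately. The special cases follow by evaluating at $\alpha=1,-1,0$.

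For item (2), the key observation is that $K_X$ is $g$-self-adjoint: $g(K_XY,Z)=g(Y,K_XZ)$. This follows from Proposition~\ref{difften}(3), since $(D_Xg)(Y,Z)$ is symmetric in $Y,Z$ because $g$ is symmetric. With this in hand, we use metric compatibility of $\nabla^g$:
\begin{align*}
Xg(Y,Z)&=g(\nabla^g_XY,Z)+g(Y,\nabla^g_XZ)\\
&=g(\nabla^g_XY+\alpha K_XY,Z)+g(Y,\nabla^g_XZ-\alpha K_XZ)\\
&=g(\nabla^{(\alpha)}_XY,Z)+g(Y,\nabla^{(-\alpha)}_XZ).
\end{align*}
Adding and subtracting $\alpha g(K_XY,Z)=\alpha g(Y,K_XZ)$ is what moves from the first line to the second. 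Uniqueness of the dual connection then gives $(\nabla^{(\alpha)})^*=\nabla^{(-\alpha)}$.

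For item (3), expand $(\nabla^{(\alpha)}_Xg)(Y,Z)=Xg(Y,Z)-g(\nabla^{(\alpha)}_XY,Z)-g(Y,\nabla^{(\alpha)}_XZ)$. Since $\nabla^gg=0$, this reduces to $-\alpha\bigl(g(K_XY,Z)+g(Y,K_XZ)\bigr)=-2\alpha g(K_XY,Z)$. By Proposition~\ref{difften}(3), the last expression equals $\alpha(D_Xg)(Y,Z)$.

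The only step that needs any care is the self-adjointness of $K_X$ used in items (2) and (3). Everything else is linear bookkeeping.
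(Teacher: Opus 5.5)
Your proof is correct. The paper gives no argument of its own here (it only cites \cite{Amari2000}), and your direct verification from the definition of $\nabla^{(\alpha)}$ together with Proposition~\ref{difften} is the standard proof. One refinement: item (3) holds for an arbitrary connection $D$, since $\nabla^{(\alpha)}=(1-\alpha)\nabla^g+\alpha D$ and $\nabla\mapsto\nabla g$ is affine, so the self-adjointness detour is unnecessary there. Items (1) and (2), by contrast, genuinely require $2\nabla^g=D+D^*$ (equivalently, the $g$-self-adjointness of $K_X$), and this is where the torsion-freeness of $D$ and $D^*$ in the dually flat setting of Proposition~\ref{difften} is used.
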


\subsection{Submanifolds and the divergence}
Divergences are functions that indicate how close two probability distributions are.
Submanifolds of exponential families are important in information geometry due to their relation with the KL divergence.\\

Let $(M,D,g)$ be a Hessian manifold, $S\subset M$ a submanifold, and $\iota:S\to M$ the inclusion map.
We denote by $h=\iota^*g$ the first fundamental form.
The affine connection $D$ is projected on $M$ by
\begin{align}
	\label{submanifold}
	h(\nabla_X Y,Z) = g(D_X \iota_*Y, \iota_*Z),\quad X,Y,Z\in\Gamma(TS).
\end{align}

\begin{definition}
	The submanifold $S\subset M$ of a Hessian manifold $(M,D,g)$ is called a $D$-\textit{autoparallel} if the following holds:
	\begin{align*}
		\iota_*\nabla_X Y = D_{X}\iota_* Y,\quad X,Y\in\Gamma(TS).
	\end{align*}
\end{definition}

\begin{remark}
	Let $\theta = (\theta^1,\ldots,\theta^n)$ be the global convex $D$-affine coordinate system over $M$.
	Then all the $m$-dimensional $D$-autoparallel submanifolds of $M$ are open sets of some $m$-dimensional planes with respect to the $\theta-$coordinate.\\
\end{remark}

The following properties are known for submanifolds of Hessian manifolds \cite{Amari2000}.\\
\begin{proposition}
	Let $S\subset M$ be a submanifold of a Hessian manifold $(M,D,g)$, $h=\iota^*g$, and $\nabla$ the affine connection $D$ projected onto $S$ via $g$. 
	\renewcommand{\labelenumi}{$\operatorname{(\theenumi)}$}
	\begin{enumerate}
		\item If we denote by $\nabla^*$ the dual connection of $\nabla$ with respect to $h$, then we have
		\begin{align*}
			h(\nabla^*_X Y,Z) = g(D^*_X \iota_*Y,\iota_*Z),\quad X,Y,Z\in\Gamma(TS).
		\end{align*}
		\item If $S$ is a $D$-autoparallel submanifold of $(M,D,g)$, then $\nabla$ is also a flat connection on $S$.
		Consequently, $\nabla^*$ is also a flat connection.\\
		\item The submanifold $S$ is $D$-autoparallel if and only if for any $(p,v)\in TS$, there exists a $D$-geodesic $\gamma:(-\epsilon,\epsilon)\to M$ such that $\gamma(0)=p$, $\dot{\gamma}(0)=v$, and $\gamma(t)\in S$ for all $t\in(-\epsilon,\epsilon)$.\\
	\end{enumerate}
\end{proposition}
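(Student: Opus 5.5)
The three items are proved separately. Throughout, fix local coordinates $u=(u^1,\ldots,u^m)$ on $S$. For tangent fields we write $X=\partial_a:=\partial/\partial u^a$, and we write $\iota_*\partial_a=\sum_i \frac{\partial\theta^i}{\partial u^a}\partial_{\theta^i}$ in the global $D$-affine coordinates $\theta$.

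For (1), the plan is a direct computation. Start from the defining identity of the dual connection on $M$, applied to the fields $\iota_*Y,\iota_*Z$ along $S$:
\begin{align*}
(\iota_*X)\,g(\iota_*Y,\iota_*Z)=g(D_X\iota_*Y,\iota_*Z)+g(\iota_*Y,D^*_X\iota_*Z).
\end{align*}
Since $h=\iota^*g$, the left side equals $X\,h(Y,Z)$. By the definition \eqref{submanifold}, the first term on the right equals $h(\nabla_XY,Z)$. Comparing with the definition of $\nabla^*$ as the $h$-dual of $\nabla$, namely $Xh(Y,Z)=h(\nabla_XY,Z)+h(Y,\nabla^*_XZ)$, we get $h(Y,\nabla^*_XZ)=g(\iota_*Y,D^*_X\iota_*Z)$ for all $Y$. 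Swapping the roles of $Y$ and $Z$ gives the claim. The one point to note is that $D_X\iota_*Y$ makes sense as a covariant derivative along $\iota$, via the pullback connection.

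For (2), suppose $S$ is $D$-autoparallel. By the preceding Remark, $S$ is an open subset of an $m$-plane in $\theta$-coordinates, so we may take $u$ to be affine parameters of that plane: $\theta(u)=\theta_0+\sum_a u^a v_a$. Then $\iota_*\partial_a=\sum_i v_a^i\partial_{\theta^i}$ has constant components, so $D_{\partial_a}\iota_*\partial_b=0$. Autoparallelism then gives $\iota_*\nabla_{\partial_a}\partial_b=0$, hence $\nabla_{\partial_a}\partial_b=0$ because $\iota_*$ is injective. So $\nabla$ is torsion-free with vanishing curvature in these coordinates, i.e. flat. For $\nabla^*$, use the standard fact that, for a torsion-free $\nabla$ and metric $h$, the curvatures satisfy $h(R^*(X,Y)Z,W)=-h(Z,R(X,Y)W)$; this follows by differentiating the duality relation twice. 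The dual connection $\nabla^*$ is also torsion-free, since $h(\nabla^*_XY-\nabla^*_YX-[X,Y],Z)$ can be computed from (1) together with torsion-freeness of $D^*$. Hence $\nabla^*$ is flat.

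For (3), the forward direction: if $S$ is autoparallel, then in the affine parameters above the straight line $t\mapsto\theta(p)+tv$ is a $D$-geodesic, and since $v$ lies in the plane it stays in $S$ for small $t$. For the converse, take $p\in S$ and $v,w\in T_pS$. The $D$-geodesics through $p$ with velocities $v$, $w$, $v+w$ are straight lines in $\theta$ lying in $S$. The step I expect to be the main obstacle is showing from this that $S$ is locally planar; here is the plan. Let $\gamma$ be a geodesic in $S$, so $\ddot\theta=0$ along it. The second fundamental form $\alpha(X,Y)=D_X\iota_*Y-\iota_*\nabla_XY$ is a symmetric tensor. Along $\gamma$ we have $D_{\dot\gamma}\dot\gamma=0$, and this vector is tangent to $S$, so $\alpha(v,v)=0$ for all $v\in T_pS$. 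Polarization then gives $\alpha\equiv0$, which is exactly the autoparallel condition.
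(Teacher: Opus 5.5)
Your proof is correct. The paper does not prove this proposition; it quotes it from Amari--Nagaoka \cite{Amari2000}. Your argument is the standard one from that source:
\begin{itemize}
\item for (1), you pull back the $g$-duality of $(D,D^*)$ along $\iota$;
\item for (2), you use affine parameters on the plane to get $\nabla$ flat, then the curvature duality $h(R^*(X,Y)Z,W)=-h(Z,R(X,Y)W)$ and torsion-freeness of $\nabla^*$ obtained from (1);
\item for (3), you show the second fundamental form vanishes by polarization.
\end{itemize}

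Two small points should be made explicit.

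First, the preceding Remark gives planarity only locally, or for connected $S$. That is enough for your purposes, because flatness and the geodesic condition are local.

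Second, in the converse of (3) you need $S$ to be embedded. Only then is $\gamma=\iota\circ\tilde\gamma$ for a smooth curve $\tilde\gamma$ in $S$, so that the Gauss decomposition $D_{\dot\gamma}\dot\gamma=\iota_*\nabla_{\dot{\tilde\gamma}}\dot{\tilde\gamma}+\alpha(\dot{\tilde\gamma},\dot{\tilde\gamma})$ applies and forces $\alpha(v,v)=0$.

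The ``main obstacle'' you anticipate, local planarity, is not needed in (3): $\alpha\equiv 0$ is literally the definition of $D$-autoparallel. Working with $\alpha\equiv 0$ directly would also free you from the unproved Remark in the other places you use it:
\begin{itemize}
\item In the forward direction of (3), the $\nabla$-geodesic $\tilde\gamma$ of $S$ with initial data $(p,v)$ satisfies $D_{\dot\gamma}\dot\gamma=\iota_*\nabla_{\dot{\tilde\gamma}}\dot{\tilde\gamma}=0$, so $\iota\circ\tilde\gamma$ is the required $D$-geodesic.
\item In (2), the relation $D_X\iota_*Y=\iota_*\nabla_XY$ gives $\iota_*R^{\nabla}(X,Y)Z=(\iota^*R^{D})(X,Y)\iota_*Z=0$, so $R^\nabla=0$.
\end{itemize}
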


\begin{definition}
	Let $(g,D,D^*)$ be a dually flat structure on $M$ and $S\subset M$ be a submanifold.
	If $S$ is autoparallel with respect to both $D$ and $D^*$, then $S$ is called a \textit{doubly autoparallel submanifold} of $(M,g,D,D^*)$.\\
\end{definition}

\newpage
We define the Bregman divergence of a dually flat structure.\\

\begin{definition}
	Let $(g=Dd\Psi,D,D^*)$ be a dually flat structure on $M$ with the global convex $D$-affine coordinate system $(\theta^1,\ldots,\theta^n)$ on $M$.
Let us denote by \linebreak $\eta=(\eta_1,\ldots,\eta_n)$ the $D^*$-affine coordinate system and $\varPhi$ the potential function of $g$ with respect to $D^*$, each obtained by \eqref{expcor} and \eqref{exppot},  respectively.
	The \textit{Bregman divergence} $\rho(\cdot,\cdot)$ of $(g,D,D^*)$ is defined by
	\begin{align}
		\label{candi}
		\rho(x,y) = \Psi(y) + \varPhi(x) - \sum_{i=1}^{n} \theta^i(y)\eta_i(x),\quad x,y\in M.
	\end{align}
\end{definition}
\begin{example}
	For the dually flat structure $(g^F,\nabla^{(\mathrm{e})},\nabla^{(\mathrm{m})})$ obtained by an exponential family in Example~\ref{expo}, the Bregman divergence is equal to the KL divergence:
	\begin{align*}
		\mathrm{KL}(p\|q) = \int_{\Omega} p(\omega)\log\left(\frac{p(\omega)}{q(\omega)}\right)\,d\omega, \quad p,q\in M.
	\end{align*}
\end{example}

In general, the Bregman divergence is not necessarily a distance function since the symmetry and the triangle inequality do not have to hold.
The following property is known for the Bregman divergence \cite{Amari2000}.\\

\begin{proposition}
	Let $\rho(\cdot,\cdot)$ be the Bregman divergence of a dually flat structure $(g,D,D^*)$ on $M$.
	Then, for any $x,y\in M$, $\rho(x,y)\geq0$.
	In particular, we have $\rho(x,y)=0$ if and only if $x=y$.
\end{proposition}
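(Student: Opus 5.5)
The plan is to rewrite $\rho(x,y)$ as the remainder of a first-order Taylor expansion of the strictly convex potential $\psi$ in the $\theta$-coordinates, and then use convexity of the chart $\Theta$ to get both claims. Throughout, write $\theta_x=\theta(x)$ and $\theta_y=\theta(y)$.

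First, substitute \eqref{exppot} for $\varPhi(x)$ and \eqref{expcor} for $\eta_i(x)$ into \eqref{candi}. This gives
\begin{align*}
\rho(x,y)=\psi(\theta_y)-\psi(\theta_x)-\sum_{i=1}^n\frac{\partial\psi}{\partial\theta^i}(\theta_x)\bigl(\theta^i_y-\theta^i_x\bigr).
\end{align*}
So $\rho(x,y)$ is exactly the gap between $\psi(\theta_y)$ and the tangent hyperplane of $\psi$ at $\theta_x$, evaluated at $\theta_y$.

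Next, use convexity of $\Theta$: the segment $\theta(t)=\theta_x+t(\theta_y-\theta_x)$, $t\in[0,1]$, lies in $\Theta$. Set $f(t)=\psi(\theta(t))$. By Taylor's theorem with integral remainder,
\begin{align*}
\rho(x,y)=f(1)-f(0)-f'(0)=\int_0^1(1-t)f''(t)\,dt .
\end{align*}
Here $f''(t)=\sum_{i,j}\partial_i\partial_j\psi(\theta(t))\,(\theta_y-\theta_x)^i(\theta_y-\theta_x)^j$. This is the metric $g=Dd\Psi$ evaluated on the constant coordinate vector $\theta_y-\theta_x$, because $D$ is flat in $\theta$ and so its Hessian is the ordinary coordinate Hessian.

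Finally, since $g$ is Riemannian, $f''(t)\ge 0$ for all $t$, which gives $\rho(x,y)\ge0$. If $\rho(x,y)=0$, then by continuity of the integrand $f''\equiv0$ on $[0,1]$. In particular $f''(0)=g(v,v)=0$ with $v=\theta_y-\theta_x$, and positive definiteness forces $\theta_y=\theta_x$. Since $\theta$ is a global chart, hence injective, this gives $x=y$. The converse is immediate from the first display.

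The only delicate point is the first step: checking that the definitions \eqref{expcor}, \eqref{exppot} and \eqref{candi} really combine into the Taylor remainder form, with the signs and the roles of $x$ and $y$ correct. After that, the argument is routine once convexity of $\Theta$ guarantees the segment stays in the domain.
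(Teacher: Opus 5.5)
Your proof is correct. Substituting the definitions of $\eta_i$ and $\varPhi$ into $\rho$ does give $\rho(x,y)=\psi(\theta_y)-\psi(\theta_x)-\sum_i\partial_i\psi(\theta_x)(\theta^i_y-\theta^i_x)$. Running the integral-remainder argument along the segment, which stays in $\Theta$ by convexity, then yields both nonnegativity and the equality case. This uses that $g=\sum_{i,j}\partial_i\partial_j\psi\,d\theta^i\otimes d\theta^j$ is positive definite and that $\theta$ is injective.

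The paper does not prove this statement itself; it simply cites the literature. Your first-order Taylor-remainder argument is a standard way to prove it. Another common route is the Legendre--Fenchel inequality $\psi(\theta)+\varphi(\eta)\geq\sum_i\theta^i\eta_i$. That route first requires identifying $\varPhi$ as the Legendre transform $\sup_\theta\{\sum_i\theta^i\eta_i-\psi(\theta)\}$. Your argument avoids that step by working directly from the paper's definitions.
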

The Bregman divergence of a dually flat structure $(g,D,D^*)$ satisfies 
\begin{align}
    \label{divdecom}
	\rho(x,z) = \rho(x,y) + \rho(y,z) + \sum_{i=1}^n(\theta^i(z)-\theta^{i}(y))(\eta_i(y) - \eta_i(x)),
\end{align}
for $x,y,z\in M$.
In particular, by \eqref{dualness} the generalized Pythagorean theorem holds for the Bregman divergence \cite{Amari2000}.\\

\begin{theorem}	\label{genpy}
	Let $\rho(\cdot,\cdot)$ be the Bregman divergence of a dually flat structure $(g,D,D^*)$ on $M$.
	For $x,y,z\in M$, suppose that there exists a $D^*$-geodesic $\gamma^*:[a,b]\to M$ from $\gamma^*(a)=x$ to $\gamma^*(b)=y$, and a $D$-geodesic $\gamma:[c,d]\to M$ from $\gamma(c)=y$ to $\gamma^*(d)=z$ such that $g_y(\dot{\gamma}(c),\dot{\gamma}^*(b))=0$.
	Then, the following equation holds$\mathrm{:}$
	\begin{align*}
		\rho(x,z) = \rho(x,y) + \rho(y,z).
	\end{align*}
\end{theorem}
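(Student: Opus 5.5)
The argument rests on the three-point identity \eqref{divdecom}. Applied to $x,y,z$, it reduces the claim to showing that the cross term
\begin{align*}
	\sum_{i=1}^n\bigl(\theta^i(z)-\theta^i(y)\bigr)\bigl(\eta_i(y)-\eta_i(x)\bigr)
\end{align*}
vanishes. Before that, I will verify \eqref{divdecom} directly from \eqref{candi}. Expanding $\rho(x,y)+\rho(y,z)$ gives $\Psi(y)+\varPhi(x)-\sum\theta^i(y)\eta_i(x)+\Psi(z)+\varPhi(y)-\sum\theta^i(z)\eta_i(y)$. Next, I use the Legendre relation $\Psi(y)+\varPhi(y)=\sum\theta^i(y)\eta_i(y)$, which is \eqref{exppot} evaluated at $y$. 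Subtracting the result from $\rho(x,z)$ leaves exactly the bilinear cross term above.

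The geometric input is as follows. Since $D$ is flat with affine coordinates $\theta$, a $D$-geodesic is a straight line in $\theta$-coordinates. Hence $\theta(\gamma(t))=\theta(y)+\frac{t-c}{d-c}\bigl(\theta(z)-\theta(y)\bigr)$, up to affine reparametrization, and $\dot\gamma(c)$ is a positive multiple of $\sum_i(\theta^i(z)-\theta^i(y))\,\partial/\partial\theta^i$. Similarly, $D^*$ is flat with affine coordinates $\eta$, so the $D^*$-geodesic $\gamma^*$ is a straight line in $\eta$-coordinates. Its velocity $\dot\gamma^*(b)$ is a positive multiple of $\sum_j(\eta_j(y)-\eta_j(x))\,\partial/\partial\eta_j$.

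Now I apply the duality relation \eqref{dualness}, $g(\partial/\partial\theta^i,\partial/\partial\eta_j)=\delta_{ij}$. This gives
\begin{align*}
	g_y\bigl(\dot\gamma(c),\dot\gamma^*(b)\bigr)=\lambda\mu\sum_{i=1}^n\bigl(\theta^i(z)-\theta^i(y)\bigr)\bigl(\eta_i(y)-\eta_i(x)\bigr)
\end{align*}
with $\lambda,\mu>0$. The orthogonality hypothesis therefore forces the cross term to vanish, and \eqref{divdecom} yields $\rho(x,z)=\rho(x,y)+\rho(y,z)$. The degenerate cases $y=x$ or $y=z$ are trivial, since one velocity is then zero and one of the divergences vanishes.

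The only step needing care is the claim that geodesics of a flat connection are affine lines in its affine coordinates. This holds because the Christoffel symbols vanish in those coordinates, so the geodesic equation becomes $\ddot\theta^i=0$, and likewise $\ddot\eta_i=0$. Given that, the endpoint velocities are proportional to coordinate differences, and everything else is linear algebra. One should also note that a geodesic may be traversed in either direction. The proportionality constants' signs do not matter, because we only use that the inner product is zero.
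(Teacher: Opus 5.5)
Your proof is correct and matches the paper's argument, which gives no separate proof beyond noting that the theorem follows from the three-point identity \eqref{divdecom} together with the duality relation \eqref{dualness}. You verify \eqref{divdecom} from \eqref{candi} via the Legendre relation, then use that $D$- and $D^*$-geodesics are straight lines in $\theta$ and $\eta$ respectively, so $g_y(\dot\gamma(c),\dot\gamma^*(b))$ is a positive multiple of the cross term, and you rightly read the typo $\gamma^*(d)=z$ as $\gamma(d)=z$.
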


Let $(g^F,\nabla^{(\mathrm{e})},\nabla^{(\mathrm{m})})$ be the dually flat structure obtained from the exponential family $M$.
Fix a point $x\in M$ and a submanifold $S\subset M$.
In information geometry, it is a common problem to find the point $y\in S$ that minimizes $\rho(x,y)$ or $\rho(y,x)$. \\

\begin{cor}\cite{Amari2000}
	\label{klproj}
	Let $\rho(\cdot,\cdot)$ be the Bregman divergence of a dually flat structure $(g,D,D^*)$ on $M$, $x\in M$, and $S\subset M$ a submanifold.
	\renewcommand{\labelenumi}{$\operatorname{(\theenumi)}$}
	\begin{enumerate}
		\item If $S$ is a $D$-autoparallel submanifold, then $\rho(x,y)=\mathrm{min}\{\rho(x,z)\mid z\in S\}$ if and only if there exists a $D^*$-geodesic from $x$ to $y\in S$ that is perpendicular to $S$ at $y$ with respect to $g$.\\
		\item If $S$ is a $D^*$-autoparallel submanifold, then $\rho(y,x)=\mathrm{min}\{\rho(z,x)\mid z\in S\}$ if and only if there exists a $D$-geodesic from $x$ to $y\in S$ that is perpendicular to $S$ at $y$ with respect to $g$.
	\end{enumerate}
\end{cor}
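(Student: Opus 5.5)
We prove part (1) from the three-point identity \eqref{divdecom} and the generalized Pythagorean theorem (Theorem~\ref{genpy}); part (2) then follows by exchanging the roles of $(\theta,\Psi,D)$ and $(\eta,\varPhi,D^*)$. Work in the global coordinates: $S$ is $D$-autoparallel, so by the Remark it is an open subset of an affine plane $\{\theta_0+\sum_a u^a v_a\}$ in $\theta$-coordinates. A $D^*$-geodesic from $x$ to $y$ is a straight segment in $\eta$-coordinates, with velocity at $y$ proportional to $\sum_i(\eta_i(y)-\eta_i(x))\partial/\partial\eta_i$. A tangent vector to $S$ at $y$ is $w=\sum_i w^i\partial/\partial\theta^i$ with $w$ in the span of the $v_a$. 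By \eqref{dualness}, orthogonality at $y$ becomes
\begin{align*}
\sum_i w^i(\eta_i(y)-\eta_i(x))=0 \quad\text{for all } w\in T_yS .
\end{align*}

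For sufficiency, let $z\in S$ be arbitrary. Since $S$ is flat in $\theta$, the vector $\theta(z)-\theta(y)$ lies in the span of the $v_a$, so the cross term in \eqref{divdecom} vanishes. Hence
\begin{align*}
\rho(x,z)=\rho(x,y)+\rho(y,z)\ge\rho(x,y),
\end{align*}
by nonnegativity of $\rho$. This is Theorem~\ref{genpy} applied along the $D$-geodesic (the $\theta$-segment) from $y$ to $z$. The segment stays in the plane, though we only need the algebraic identity, so it does not matter whether it stays inside the open set $S$. Equality holds only when $z=y$, which also gives uniqueness.

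For necessity, suppose $y$ minimizes $z\mapsto\rho(x,z)$ over $S$. The curve $u\mapsto\rho(x,\theta(y)+u w)$ is defined near $u=0$ because $S$ is open in the plane. Differentiating \eqref{candi} in the second argument gives
\begin{align*}
\partial_{\theta^i}\rho(x,\cdot)\big|_{y}=\eta_i(y)-\eta_i(x).
\end{align*}
The first-order condition at the interior minimum is therefore exactly the orthogonality condition above. It remains to produce the $D^*$-geodesic itself: the $\eta$-segment from $\eta(x)$ to $\eta(y)$. This is the main obstacle, because the segment must lie in $\eta(M)$. We handle it by assuming, as in the Hessian setting of the paper, that the $\eta$-range is convex; then the segment exists in $M$. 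Alternatively, the statement can be read as asserting only that the dual geodesic direction is perpendicular.

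Part (2) is identical with $\rho$ replaced by $\rho(\cdot,x)$. We use the dual form of \eqref{divdecom} with the roles of $\theta$ and $\eta$ swapped; it follows from the symmetry of \eqref{candi} under $\theta\leftrightarrow\eta$, $\Psi\leftrightarrow\varPhi$ with arguments reversed. The $D^*$-flatness of $S$ then kills the cross term.
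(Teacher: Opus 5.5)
Your proof is correct. The paper gives no proof of this corollary; it is quoted from \cite{Amari2000}. Your argument is the standard one, and it follows the same scheme the paper uses in Proposition~\ref{eproj}. The first variation of \eqref{candi} together with \eqref{dualness} gives the orthogonality condition. Then \eqref{divdecom}, with its cross term killed by the flatness of $S$, gives global minimality and uniqueness.

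The convexity of $\eta(M)$ that you add for the ``only if'' part of (1) is genuinely needed, not an artifact of your method. It is also not part of the paper's setting, so ``as in the Hessian setting of the paper'' is inaccurate: the paper assumes only $\Theta$ convex, and treats convexity of the $\eta$-range as an extra condition.

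Without it the statement fails. Suppose the $\eta$-segment from $\eta(x)$ to $\eta(y)$ leaves $\eta(M)$. Take $S=\{y\}$, or a short $\theta$-segment through $y$ in a direction Euclidean-orthogonal to $\eta(y)-\eta(x)$. Your sufficiency computation, which is purely algebraic, shows that $y$ is the unique minimizer. Yet no $D^*$-geodesic from $x$ to $y$ exists in $M$.

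For $\vmf$ this causes no trouble, since $\eta(\vmf)=B_n^{\circ}(1)$. In (2) the issue never arises: the required $D$-geodesic is the $\theta$-segment, which stays in the convex set $\Theta$.

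Two minor points. First, for (2) you need no separate ``dual form'' of \eqref{divdecom}. The substitution $(x,y,z)\mapsto(z,y,x)$ already gives $\rho(z,x)=\rho(z,y)+\rho(y,x)+\sum_i(\theta^i(x)-\theta^i(y))(\eta_i(y)-\eta_i(z))$. Its cross term vanishes because $\eta(y)-\eta(z)$ is tangent to the $\eta$-flat $S$. Second, like the paper's Remark, you tacitly need $S$ connected, so that it lies in a single plane. For a disconnected $S$ the ``if'' direction can fail.
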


\section{The von Mises--Fisher distribution}
\label{sec:vMF}

Von Mises--Fisher (vMF) distributions are exponential distributions whose sample space is the unit hypersphere.
Let $\theta = (\theta^1,\ldots,\theta^n)$ be an affine coordinate system of $\mathbb{R}^n$.
Denote by $\langle\cdot,\cdot\rangle$ the Euclidean metric in the $\theta$-coordinate, and $\|\cdot\|$ the norm defined by it.\\

\begin{definition}
    Let $\mathbb{S}^{n-1} = \{x \in \mathbb{R}^n \mid \|x\| = 1\}$ be the unit hypersphere in the Euclidean space $\mathbb{R}^n$.
		The vMF distribution with mean direction $\mu \in \mathbb{S}^{n-1}$ and concentration parameter $\kappa \geq 0$ is 
		\begin{equation}\label{eq:vmf_density}
			p_{\mu,\kappa}(x) = C_n(\kappa)\exp(\kappa\langle x,\mu \rangle),\quad x\in \mathbb{S}^{n-1}.
		\end{equation}
		Here, the normalization constant $C_n(\kappa)$ is determined by the condition $\int_{\mathbb{S}^{n-1}} p_{\mu,\kappa}(x) \, dx = 1$, where $dx$ denotes the natural volume form of $\mathbb{S}^{n-1}$.\\
\end{definition}

Using the integral representation of the modified Bessel function, one obtains
	\begin{equation}\label{eq:vmf_normalization}
		C_n(\kappa) = \frac{\kappa^{n/2-1}}{(2\pi)^{n/2} I_{n/2-1}(\kappa)}, \qquad C_n(0)=\frac{1}{\operatorname{Vol}(\mathbb{S}^{n-1})},
	\end{equation}
where $I_\nu(z)$ is the modified Bessel function of the first kind, defined for $\nu > -1$ and $z \in [0, \infty)$:
	\begin{align*}
		I_\nu(z) = \sum_{k=0}^{\infty} \frac{(z/2)^{2k+\nu}}{k! \, \Gamma(\nu + k + 1)}.
	\end{align*}

Let  $p_{\mu,\kappa}$ be a vMF distribution.
Since $p_{\mu,\kappa}$ is independent of $\mu\in \mathbb{S}^{n-1}$ when $\kappa=0$, we can parametrize the vMF distribution $p_{\mu,\kappa}$ by $\theta = \kappa\cdot\mu\in\mathbb{R}^n$.
With this parameterization, we have 
\begin{align*}
    p_{\theta}(x) = C_n(\|\theta\|)\exp(\langle x,\theta\rangle),\quad x\in \mathbb{S}^{n-1}.
\end{align*}
If we let 
\begin{align*}
    \psi(\theta) = \log \int_{\mathbb{S}^{n-1}} \exp(\langle x,\theta\rangle)\,dx,\quad \theta\in\mathbb{R}^{n},
\end{align*}
then we have $C_n(\|\theta\|) = \exp(-\psi(\theta))$ and 
\begin{align*}
    p_{\theta}(x) = \exp(\langle x,\theta\rangle - \psi(\theta)),\quad x\in \mathbb{S}^{n-1}
\end{align*}
for each $\theta\in\mathbb{R}^n$.
Thus, we see that vMF distributions are exponential distributions, and $\theta$ is the natural coordinate.
We define $\Psi = \psi\circ\theta$ for future use.
We denote by $\mathcal{M}_{\mathrm{vMF}}^n = \{p_{\theta}\in C^{\infty}(\mathbb{S}^{n-1}) \mid \theta\in\mathbb{R}^n\}$ the manifold of all the vMF distributions on $\mathbb{S}^{n-1}$, and call it the \textit{vMF manifold}.\\

\begin{remark}
    For convenience in studying the geometry of submanifolds, we follow \cite{SaidBombrunBerthoumieu2019,Takano2010} and parametrize the vMF distribution by the natural parameter $\theta\in\mathbb{R}^n$ throughout this paper.
    However, it is also common to exclude the origin and use the parameter space $\mathbb{S}^{n-1}\times(0,\infty)$, which is diffeomorphic to $\mathbb{R}^n\setminus\{0\}$ via $(\mu,\kappa)\mapsto\kappa\mu$.
    The parameter $(\mu,\kappa)$ is called the \textit{polar coordinate}.
\end{remark}

\subsection{Dually flat structure of the vMF manifold}

We prepare some notations and equalities before computing the Fisher metric and the Amari-Chenstov tensor.
We use the polar coordinates
\begin{align*}
    \kappa(\theta) = \|\theta\|\in(0,\infty),\quad \mu(\theta) = \frac{\theta}{\|\theta\|}\in \mathbb{S}^{n-1}
\end{align*}
for $\theta\in\mathbb{R}^n\setminus\{0\}$ to give certain expressions.
For $\kappa\geq0$, we define 
\begin{align*}
    A_n(\kappa) = 
    \begin{cases} 
  \frac{I_{n/2}(\kappa)}{I_{n/2-1}(\kappa)}, & \text{if } \kappa > 0 \\ 
  0, & \text{if } \kappa = 0.
\end{cases}
\end{align*}
The function $A_n$ is called the \textit{mean resultant length function} or the \textit{ratio of modified Bessel functions}, which is a real analytic and strictly increasing fucntion (see \cite{Mardia1999,Watson1983}).\\
\begin{proposition}
    For the von Mises-Fisher manifold $\mathcal{M}^n_{\mathrm{vMF}}$, the Fisher metric $g^F$ and the Amari-Chentsov tensor $C^{AC}$ in polar coordniate are given by the following$\mathrm{:}$
    \begin{align*}
        g^F &= A^{'}_n(\kappa)d\kappa^2 + \kappa A_n(\kappa)g_{\mathbb{S}^{n-1}},\\
        C^{AC} &= A^{''}_n(\kappa)d\kappa^3 + \left(\kappa A^{'}_n(\kappa)-A_n(\kappa)\right)\mathrm{sym}(d\kappa\otimes g_{\mathbb{S}^{n-1}}),
    \end{align*}
    where $g_{\mathbb{S}^{n-1}}$ is the standard metric on the unit sphere, and
    \newpage
    \begin{align*}
        \mathrm{sym}(d\kappa\otimes g_{\mathbb{S}^{n-1}})(X,Y,Z) = d\kappa(X)g_{\mathbb{S}^{n-1}}(Y,Z)
         + d\kappa(Y)g_{\mathbb{S}^{n-1}}(Z,X) + d\kappa(Z)g_{\mathbb{S}^{n-1}}(X,Y)
    \end{align*}
    for $X,Y,Z\in\Gamma(T\mathcal{M}^n_{\mathrm{vMF}})$.
\end{proposition}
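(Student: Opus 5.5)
Since $\vmf$ is an exponential family with natural coordinate $\theta$ and cumulant generating function $\psi$, Example~\ref{expo} gives $g^F=\econ d\Psi$ and $C^{AC}=\econ g^F$. In $\theta$-coordinates this reads $g^F_{ij}=\partial_i\partial_j\psi$ and $C^{AC}_{ijk}=\partial_i\partial_j\partial_k\psi$. The plan is to compute these derivatives of the radial function $\psi(\theta)=\phi(\|\theta\|)$ and rewrite them in polar coordinates.

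\textbf{Step 1: identify $\phi'$.} By the normalization formula, $\phi(\kappa)=-\log C_n(\kappa)=\log I_{n/2-1}(\kappa)-(n/2-1)\log\kappa+\mathrm{const}$. The Bessel recurrence $I_\nu'(\kappa)=I_{\nu+1}(\kappa)+\frac{\nu}{\kappa}I_\nu(\kappa)$ with $\nu=n/2-1$ then gives $\phi'(\kappa)=A_n(\kappa)$. Consequently the expectation coordinate is $\eta=\nabla\psi=A_n(\kappa)\mu$.

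\textbf{Step 2: the metric.} Use $d\theta=\mu\,d\kappa+\kappa\,d\mu$ with $\langle\mu,d\mu\rangle=0$ and $\langle d\mu,d\mu\rangle=g_{\mathbb{S}^{n-1}}$. Then
\begin{align*}
g^F=\langle d\eta,d\theta\rangle=A_n'\,d\kappa^2+\kappa A_n\,g_{\mathbb{S}^{n-1}}.
\end{align*}
This uses the fact that the Hessian of $\psi$ pairs $d\theta$ with $d(\nabla\psi)$.

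\textbf{Step 3: the cubic tensor.} Differentiate the Cartesian Hessian
\begin{align*}
\partial_i\partial_j\psi=\phi''\mu_i\mu_j+\frac{\phi'}{\kappa}(\delta_{ij}-\mu_i\mu_j),
\end{align*}
using $\partial_k\kappa=\mu_k$ and $\partial_k\mu_i=(\delta_{ik}-\mu_i\mu_k)/\kappa$. The result is
\begin{align*}
\partial_k\partial_i\partial_j\psi=\phi'''\mu_i\mu_j\mu_k+\Big(\frac{\phi''}{\kappa}-\frac{\phi'}{\kappa^2}\Big)\big[(\delta_{ij}-\mu_i\mu_j)\mu_k+\text{cyclic}\big].
\end{align*}
Two features make this work. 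The derivative of $\phi''\mu_i\mu_j$ produces $\frac{\phi''}{\kappa}$ times the symmetrized $P\otimes\mu$ terms, where $P=\delta-\mu\mu^{T}$. The derivative of $\frac{\phi'}{\kappa}P_{ij}$ produces $\big(\frac{\phi'}{\kappa}\big)'\mu_kP_{ij}$ together with $-\frac{\phi'}{\kappa^2}$ times the symmetrized terms.

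Contract this with $d\theta^{\otimes3}$, using $\mu\cdot d\theta=d\kappa$ and $P\,d\theta=\kappa\,d\mu$, so that $P(d\theta,d\theta)=\kappa^2g_{\mathbb{S}^{n-1}}$. This gives
\begin{align*}
C^{AC}=A_n''\,d\kappa^3+\kappa^2\Big(\frac{A_n'}{\kappa}-\frac{A_n}{\kappa^2}\Big)\mathrm{sym}(d\kappa\otimes g_{\mathbb{S}^{n-1}}),
\end{align*}
which is the claimed coefficient $\kappa A_n'-A_n$.

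\textbf{Main obstacle.} The hard part is the bookkeeping in Step~3. One must make sure the three cyclic terms collect into exactly one copy of $\mathrm{sym}(d\kappa\otimes g_{\mathbb{S}^{n-1}})$ with the stated normalization. One must also check that no $\mu\mu\mu$ terms survive beyond $\phi'''$; the cross terms $-\mu_i\mu_j\mu_k$ inside the brackets must cancel against pieces of $\big(\frac{\phi'}{\kappa}\big)'$. As a cross-check, verify the coefficient via $C^{AC}=\econ g^F$ on the pure radial and pure angular components.

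The formula holds on $\mathbb{R}^n\setminus\{0\}$, where the polar coordinates are defined, and this is all the statement asserts.
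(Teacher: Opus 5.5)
Your proposal is correct and follows the same route as the paper: you get $\partial_i\psi = A_n(\kappa)\theta^i/\kappa$ from the Bessel recurrence, then differentiate the radial function $\psi$ twice more in Cartesian coordinates and rewrite the result using $d\kappa(\partial_i)=\theta^i/\kappa$ and $g_{\mathbb{S}^{n-1}}(\partial_i,\partial_j)=\kappa^{-2}(\delta_{ij}-\theta^i\theta^j/\kappa^2)$; your $\langle d\eta,d\theta\rangle$ shortcut for the metric is only a cosmetic variant of this. Your third-derivative formula, with a plus sign on all three cyclic terms, is the correct one (the paper's intermediate display has a sign slip there), and once it is written in terms of $P=\delta-\mu\mu^{T}$ there is no separate cancellation of $\mu\mu\mu$ cross terms left to check.
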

\begin{proof}
    In polar coordinate $(\mu,\kappa)$, we have
    \begin{align*}
        \frac{\partial}{\partial \theta^i}_{(\mu,\kappa)} = \frac{\theta^i}{\kappa}\frac{\partial}{\partial\kappa} + \frac{1}{\kappa}P_{\mu}\frac{\partial}{\partial \theta^i},
    \end{align*}
where $P_\mu$ is the projection of a vector $X \in T_{(\mu,\kappa)}\bigl(\mathbb{R}^n \setminus \{0\}\bigr)$ onto $T_\mu \mathbb{S}^{n-1}$, that is,
\begin{align*}
    P_{\mu}X = X - \langle X,\mu \rangle\mu.
\end{align*}
In particular, we have 
\begin{align}
    \label{eq1}
    g_{\mathbb{S}^{n-1}}\left(\frac{\partial}{\partial\theta^i}_{(\mu,\kappa)},\frac{\partial}{\partial\theta^j}_{(\mu,\kappa)}\right) &= \frac{1}{\kappa^2}\left\langle\frac{\partial}{\partial \theta^i}^{\perp},\frac{\partial}{\partial \theta^j}^{\perp}\right\rangle_{p_{(\mu,\kappa)}}\\
    &= \frac{1}{\kappa^2}\left(\delta_{ij} - \frac{\theta^i}{\kappa}\frac{\theta^j}{\kappa}\right).
\end{align}
We also have
\begin{align}
    d\kappa_{p_{\mu,\kappa}}\left(\frac{\partial}{\partial\theta^i}\right) = \frac{\theta^i}{\kappa}. \label{eq2}
\end{align}
We use the equality $g^F = \nabla^{\mathrm{(e)}}d\Psi$ and $C^{AC} = \nabla^{\mathrm{(e)}}g^F$.
From the formula
\begin{align*}
        \frac{d}{d\kappa}I_v(\kappa) = I_{v+1}(\kappa) + \frac{v}{\kappa}I_v(\kappa),
\end{align*}
we obtain
\begin{align*}
    \frac{\partial\psi}{\partial \theta^i} &= -\frac{\partial}{\partial \theta^i}\log(C_n(\kappa)) = \frac{A_n(\kappa)}{\kappa}\theta^i.
\end{align*}
As for the second and thrid derivative of $\psi$, from \eqref{eq1} and \eqref{eq2} we have
\begin{align*}
    \frac{\partial^2\psi}{\partial\theta^i\partial\theta^j} &= A^{'}_n(\kappa)\frac{\theta^i\theta^j}{\kappa^2} + A_n(\kappa)\left(\frac{\delta_{ij}}{\kappa} - \frac{\theta^i\theta^j}{\kappa^3}\right)\\
    &= A^{'}_n(\kappa)d\kappa\left(\frac{\partial}{\partial\theta^i}\right)d\kappa\left(\frac{\partial}{\partial\theta^j}\right) + \kappa A_n(\kappa)g_{\mathbb{S}^{n-1}}\left(\frac{\partial}{\partial\theta^i},\frac{\partial}{\partial\theta^j}\right),
\end{align*}
\begin{align*}
    \frac{\partial^3\psi}{\partial\theta^i\partial\theta^j\partial\theta^k} =& A^{''}_n(\kappa)\frac{\theta^i}{\kappa}\frac{\theta^j}{\kappa}\frac{\theta^k}{\kappa} + \left(\frac{A^{'}_n(\kappa)}{\kappa}-\frac{A_n(\kappa)}{\kappa^2}\right)\frac{\theta^i}{\kappa}\left(\delta_{jk} - \frac{\theta^j}{\kappa}\frac{\theta^k}{\kappa}\right)\\
    &+ \left(\frac{A^{'}_n(\kappa)}{\kappa}-\frac{A_n(\kappa)}{\kappa^2}\right)\left(\frac{\theta^j}{\kappa}\left(\delta_{ki} - \frac{\theta^k}{\kappa}\frac{\theta^i}{\kappa}\right) - \frac{\theta^k}{\kappa}\left(\delta_{ij} - \frac{\theta^i}{\kappa}\frac{\theta^j}{\kappa}\right)\right)\\
    =&A^{''}_n(\kappa)d\kappa\left(\frac{\partial}{\partial\theta^i}\right)d\kappa\left(\frac{\partial}{\partial\theta^j}\right)d\kappa\left(\frac{\partial}{\partial\theta^k}\right) \\
    &+ \left(\kappa A^{'}_n(\kappa)-A_n(\kappa)\right)\mathrm{sym}\left(d\kappa\otimes g_{\mathbb{S}^{n-1}}\right)\left(\frac{\partial}{\partial\theta^i},\frac{\partial}{\partial\theta^j},\frac{\partial}{\partial\theta^k}\right).
\end{align*}
\end{proof}

\vspace{\baselineskip}

\begin{remark}
    The Fisher metric and the Amari-Chenstov tensor are also given by K. Takano \cite{Takano2010}, expressed by the $\theta$-coordinate system.\\
\end{remark}

\begin{remark}
    The Fisher metric was also given by S. Said, L. Bombrun, and Y. Berthoumieu \cite{SaidBombrunBerthoumieu2019}.
    They also gave the limit at $\kappa = 0$, which is 
    \begin{align*}
        g^F_{ij}(0)=\left.\frac{\partial^2\psi}{\partial\theta_i\partial\theta_j}\right|_{\theta=0}=\frac{1}{n}\delta_{ij}.
    \end{align*}
\end{remark}

\vspace{\baselineskip}

\begin{proposition}
    The Amari-Chenstov tensor converges to $0$ at the origin.
\end{proposition}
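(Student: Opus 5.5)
The plan is to show that every component $C^{AC}_{ijk}=\partial^3\psi/\partial\theta^i\partial\theta^j\partial\theta^k$ tends to $0$ as $\theta\to0$. The natural coordinate $\theta$ is global on $\mathcal{M}^n_{\mathrm{vMF}}=\mathbb{R}^n$, so it suffices to work with these components.

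The cleanest route avoids the polar-coordinate formula, which is singular at $\kappa=0$. The function $\psi(\theta)=\log\int_{\mathbb{S}^{n-1}}e^{\langle x,\theta\rangle}dx$ is real analytic on all of $\mathbb{R}^n$, since the integrand is entire in $\theta$ and the integral is positive. Hence all third derivatives are continuous, and the limit equals the value at the origin. At $\theta=0$ the third derivative of the cumulant generating function is the third central moment of the coordinate functions under the uniform distribution on the sphere:
\begin{align*}
C^{AC}_{ijk}(0)=\mathbb{E}_{p_0}\big[(x^i-\mathbb{E}x^i)(x^j-\mathbb{E}x^j)(x^k-\mathbb{E}x^k)\big].
\end{align*}
This follows from the definition of $C^{AC}$ in Example~\ref{expo} with score $\partial_i\log p_\theta=x^i-\partial_i\psi$. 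Under the uniform measure, $\mathbb{E}x^i=0$ by the symmetry $x\mapsto -x$. The same symmetry forces $\mathbb{E}[x^ix^jx^k]=0$, since the integrand is odd. So $C^{AC}(0)=0$.

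As a cross-check, consistent with the preceding Proposition, I would also show that the polar expression has limit $0$. Using the series $A_n(\kappa)=\kappa/n-\kappa^3/(n^2(n+2))+O(\kappa^5)$, which is odd and analytic, we get $A_n''(\kappa)=O(\kappa)$ and $\kappa A_n'(\kappa)-A_n(\kappa)=O(\kappa^3)$. In the $\theta$-component formula of the proof above, the coefficients multiplying the bounded quantities $\theta^i/\kappa$ and $\delta_{jk}-\theta^j\theta^k/\kappa^2$ are $A_n''$ and $(\kappa A_n'-A_n)/\kappa^2$, both $O(\kappa)$. Hence every component is $O(\kappa)\to0$.

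The only delicate step is the justification of smoothness (or the expansion of $A_n$) at the origin, because the polar formulas are undefined at $\kappa=0$. The analyticity of $\psi$ in $\theta$ resolves this directly, so I would rely on the symmetry argument as the main proof.
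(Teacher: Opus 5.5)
Your proof is correct and rests on the same idea as the paper's. The paper notes that $\psi(-\theta)=\psi(\theta)$, which comes from the antipodal invariance of the sphere measure, so every odd-order derivative of $\psi$ vanishes at $\theta=0$, including $C^{AC}_{ijk}=\partial_i\partial_j\partial_k\psi$; your third-central-moment computation under the uniform distribution shows exactly this, and your appeal to the real analyticity of $\psi$ (plus the $O(\kappa)$ polar cross-check) makes explicit the continuity step the paper leaves implicit.
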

\begin{proof}
    The cumulant generating function $\psi$ is an even function, that is, $\psi(-\theta) = \psi(\theta),\,\theta\in\mathbb{R}^n$ holds.
    Thus, the odd-order derivative of $\psi$ is always $0$ at the origin.
\end{proof}

\vspace{\baselineskip}

It is also easy to see that the Amari-Chenstov tensor vanishes when the dualistic structure is restricted on a sphere.\\

\begin{cor}
    Let $m<n$, and let $\mathcal{M}_{\mathbb{S}^{m}(r)} = \{p_{\theta}\mid \theta\in \mathbb{S}^m(r)\}$.
    For the dualistic structure $(h,\nabla,\nabla^*)$ induced on $\mathcal{M}_{\mathbb{S}^{m}(r)}$ by \eqref{submanifold}, we have $\nabla h=0$, which is equivalent to $\nabla = \nabla^* = \nabla^h$.
\end{cor}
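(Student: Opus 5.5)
The plan is to compute the induced cubic tensor $(\nabla_X h)(Y,Z)$ for $X,Y,Z\in\Gamma(T\mathcal{M}_{\mathbb{S}^m(r)})$ and show that it vanishes identically. We interpret $\mathbb{S}^m(r)$ as the sphere of radius $r>0$ centred at the origin in some $(m+1)$-dimensional linear subspace of $\mathbb{R}^n$; the case $m+1=n$ is the full sphere $\kappa=r$. Throughout, $\iota$ denotes the inclusion and $h=\iota^*g^F$.

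The first step is to relate $\nabla h$ to the ambient tensor. By \eqref{submanifold} and the definition of $h$, for $X,Y,Z$ tangent to the submanifold we have
\begin{align*}
	(\nabla_X h)(Y,Z) &= Xh(Y,Z) - h(\nabla_X Y,Z) - h(Y,\nabla_X Z)\\
	&= X g^F(\iota_*Y,\iota_*Z) - g^F(\econ_X\iota_*Y,\iota_*Z) - g^F(\iota_*Y,\econ_X\iota_*Z)\\
	&= (\econ_{X} g^F)(\iota_*Y,\iota_*Z) = C^{AC}(\iota_*X,\iota_*Y,\iota_*Z).
\end{align*}
The only point to check here is that replacing $h(\nabla_XY,Z)$ by $g^F(\econ_X\iota_*Y,\iota_*Z)$ is legitimate. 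This holds precisely because $\iota_*Z$ is tangent, so the normal component of $\econ_X\iota_*Y$ drops out. Consequently, $\nabla h = \iota^*C^{AC}$.

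The second step substitutes the polar formula for $C^{AC}$ from the Proposition above. On the submanifold we have $\kappa\equiv r$, so $\iota^*d\kappa=0$. Every term of $C^{AC}$ contains at least one factor $d\kappa$: the term $A_n''(\kappa)\,d\kappa^3$ does, and so does each summand of $\mathrm{sym}(d\kappa\otimes g_{\mathbb{S}^{n-1}})$. Hence $\iota^*C^{AC}=0$, which gives $\nabla h=0$. The origin, where polar coordinates are singular, causes no trouble because $r>0$.

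For the equivalence, we use the difference tensor of the induced structure. By item (1) of the unnumbered proposition on submanifolds, $\nabla^*$ is the $h$-dual of $\nabla$, so the identities of Proposition~\ref{difften} apply to $(h,\nabla,\nabla^*)$ once their proofs are read as needing only duality. These give $-2h(K_XY,Z)=(\nabla_Xh)(Y,Z)$ and $2K=\nabla-\nabla^*$. Therefore $\nabla h=0$ holds if and only if $K=0$, if and only if $\nabla=\nabla^*$, and in that case both equal $\tfrac12(\nabla+\nabla^*)=\nabla^h$. For the converse direction, if $\nabla=\nabla^h$ then $\nabla h=\nabla^h h=0$.

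There is no real obstacle. The only care needed is the first step, namely the tangentiality argument identifying $\nabla h$ with the pullback of $C^{AC}=\econ g^F$.
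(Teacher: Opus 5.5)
Your proof is correct and follows the paper's argument, made explicit. The paper only observes that $\kappa\equiv r$ kills $d\kappa$, and hence the cubic form, on tangent vectors; your identity $\nabla h=\iota^*C^{AC}$ is exactly the step it leaves implicit, and it is more accurate than the paper's phrasing via $K$, since the ambient $K_XY$ for spherical $X,Y$ is proportional to $\partial/\partial\kappa$ and only its tangential part vanishes.

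One small correction in the equivalence step: Proposition~\ref{difften}(1) needs torsion-freeness as well as duality. The induced $\nabla,\nabla^*$ do have this property, because they are tangential projections of the torsion-free connections $\econ,\mcon$.
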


\begin{proof}
    Since $\kappa$ is constant on $\mathcal{M}_{\mathbb{S}^{m}(r)}$, we get $\mathrm{d}\kappa = 0$ on the subsphere, which causes all components of the difference tensor $K$ (and the cubic form $C^{\mathrm{AC}}$) to vanish on spherical tangent vectors.
\end{proof}

\newpage

\begin{proposition}
    The difference tensor in polar coordinate system $(\mu,\kappa)$ is
    \begin{align*}
        K_{(\mu,\kappa)} =& -\frac{1}{2}\frac{A_n^{''}(\kappa)}{A_n^{'}(\kappa)}\frac{\partial}{\partial \kappa}\otimes d\kappa^2 + \frac{1}{2}\left(\frac{A_n(\kappa)}{A^{'}_n(\kappa)}-\kappa\right)\frac{\partial}{\partial\kappa}\otimes g_{\mathbb{S}^{n-1}}\\
        &+ \frac{1}{2}\left(\frac{1}{\kappa}-\frac{A_n^{'}(\kappa)}{A_n(\kappa)}\right)\mathrm{sym}(P_{\mu}\otimes d\kappa).
    \end{align*}
\end{proposition}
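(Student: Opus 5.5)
The plan is to read off $K$ from the Amari--Chentsov tensor of the preceding proposition, using Proposition~\ref{difften}(3). For the exponential family $\vmf$ with $D=\econ$ we have $(\econ_X g^F)(Y,Z)=C^{AC}(X,Y,Z)$. Hence
\begin{align*}
  g^F(K_XY,Z)=-\tfrac12\,C^{AC}(X,Y,Z),\qquad X,Y,Z\in\Gamma(T\vmf).
\end{align*}
So $K$ is $-\tfrac12 C^{AC}$ with its last index raised by $g^F$. I will work on $\mathbb{R}^n\setminus\{0\}$ in polar coordinates. There the tangent space splits $g^F$-orthogonally as $\mathrm{span}\{\partial/\partial\kappa\}\oplus T\mathbb{S}^{n-1}$, because $g^F=A_n'(\kappa)d\kappa^2+\kappa A_n(\kappa)g_{\mathbb{S}^{n-1}}$ has no cross term.

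The first step is to invert $g^F$ blockwise. A $1$-form $\alpha$ is represented by the vector $\frac{1}{A_n'}\alpha(\partial_\kappa)\,\partial_\kappa+\frac{1}{\kappa A_n}\,\alpha^{\sharp_{\mathbb{S}}}$. Here $\sharp_{\mathbb{S}}$ is the index raising by $g_{\mathbb{S}^{n-1}}$ on the spherical factor. We have $A_n'>0$ since $A_n$ is strictly increasing, and $\kappa A_n>0$ for $\kappa>0$, so both blocks are invertible. I then apply this to $\alpha=-\tfrac12 C^{AC}(X,Y,\cdot)$ term by term.

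For the term $A_n''\,d\kappa^3$, only the radial block contributes. It gives $-\frac{A_n''}{2A_n'}\,d\kappa(X)d\kappa(Y)\,\partial_\kappa$, which is the first term of the statement.

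The term $(\kappa A_n'-A_n)\mathrm{sym}(d\kappa\otimes g_{\mathbb{S}^{n-1}})$ splits into two parts according to $Z$.

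\emph{Radial $Z$.} Taking $Z=\partial_\kappa$ only the summand $d\kappa(Z)g_{\mathbb{S}^{n-1}}(X,Y)$ survives. Raising it gives $-\frac{\kappa A_n'-A_n}{2A_n'}\,g_{\mathbb{S}^{n-1}}(X,Y)\,\partial_\kappa=\frac12\bigl(\frac{A_n}{A_n'}-\kappa\bigr)\partial_\kappa\otimes g_{\mathbb{S}^{n-1}}$, which is the second term.

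\emph{Spherical $Z$.} For $Z$ tangent to the sphere, the summands $d\kappa(X)g_{\mathbb{S}^{n-1}}(Y,Z)+d\kappa(Y)g_{\mathbb{S}^{n-1}}(X,Z)$ survive. Raising with $\frac{1}{\kappa A_n}g_{\mathbb{S}^{n-1}}^{-1}$ gives
\begin{align*}
  -\frac{\kappa A_n'-A_n}{2\kappa A_n}\bigl(d\kappa(X)P_\mu Y+d\kappa(Y)P_\mu X\bigr)
  =\frac12\Bigl(\frac1\kappa-\frac{A_n'}{A_n}\Bigr)\mathrm{sym}(P_\mu\otimes d\kappa)(X,Y).
\end{align*}
This uses that $g_{\mathbb{S}^{n-1}}(Y,\cdot)^{\sharp_{\mathbb{S}}}$ is exactly the spherical (tangential) component of $Y$. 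Summing the three contributions gives the formula.

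The main obstacle is the spherical raising step. The identification $g_{\mathbb{S}^{n-1}}(Y,\cdot)^{\sharp_{\mathbb{S}}}=P_\mu Y$ must be made in the same normalization as in \eqref{eq1}. There $g_{\mathbb{S}^{n-1}}$ pulled back to $\theta$-coordinates equals $\kappa^{-2}\langle P_\mu\cdot,P_\mu\cdot\rangle$, and $\partial/\partial\theta^i$ has spherical part $\kappa^{-1}P_\mu\partial/\partial\theta^i$. I will check that the factors of $\kappa$ from this rescaling cancel. Concretely, I will verify in $\theta$-coordinates that $g^F(\mathrm{sym}(P_\mu\otimes d\kappa)(\partial_i,\partial_j),\partial_k)$ reproduces the corresponding component of the third-derivative formula for $\psi$ computed in the preceding proof. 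That check fixes the meaning of $\mathrm{sym}(P_\mu\otimes d\kappa)$ as $d\kappa(X)P_\mu Y+d\kappa(Y)P_\mu X$, with $P_\mu$ read as the projection onto the spherical factor. The remaining steps are routine algebra.
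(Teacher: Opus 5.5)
Your proposal is correct and is essentially the computation the paper intends. By Proposition~\ref{difften}(3) with $D=\econ$ and $C^{AC}=\econ g^F$, one has $g^F(K_XY,Z)=-\tfrac12 C^{AC}(X,Y,Z)$, and raising the last index with the block-diagonal $g^F=A_n'd\kappa^2+\kappa A_n g_{\mathbb{S}^{n-1}}$ gives the three terms exactly as you describe. Your normalization worry is harmless: viewed as an endomorphism of $T_\theta(\mathbb{R}^n\setminus\{0\})$, the map $P_\mu X=X-\langle X,\mu\rangle\mu$ is precisely the projection onto $\ker d\kappa$ along $\partial/\partial\kappa$, so the spherical raising yields $P_\mu Y$ with no stray factors of $\kappa$.
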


\begin{proof}
    The proof is a simple computation using equation $(3)$ of Proposition~\ref{difften}.
\end{proof}

\vspace{\baselineskip}

We now express the affine connections on the vMF manifold by using the mean direction and concentration.\\

\begin{proposition}
    The $\alpha$-connections of the von Mises-Fisher manifold on the polar coordinate $(\mu,\kappa)$ are given by
    \begin{align*}
        \nabla^{(\alpha)}_{\frac{\partial}{\partial \kappa}} \frac{\partial}{\partial \kappa} &= (1-\alpha)\frac{1}{2}\frac{A^{''}_n(\kappa)}{A^{'}_n(\kappa)}\frac{\partial}{\partial \kappa},\\
        \nabla^{(\alpha)}_{\frac{\partial}{\partial \kappa}} X &= \frac{1}{2}\left((1 + \alpha)\frac{1}{\kappa}+(1 - \alpha)\frac{A^{'}_n(\kappa)}{A_n(\kappa)}\right)X,\\
        \nabla^{(\alpha)}_X Y &= \nabla^{g_{\mathbb{S}^{n-1}}}_X Y + \frac{1}{2}\left((\alpha-1)\frac{A_n(\kappa)}{A^{'}_n(\kappa)} - (1 + \alpha)\kappa\right)g_{\mathbb{S}^{n-1}}(X,Y)\frac{\partial}{\partial\kappa},
    \end{align*}
    where $X,Y\in \Gamma(T\mathbb{S}^{n-1})$.\\
\end{proposition}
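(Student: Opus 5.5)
The approach is to use the defining formula $\nabla^{(\alpha)}_X Y = \nabla^{g}_X Y + \alpha K_X Y$ from \eqref{alphacon}. It is cleaner, however, to go through item (1) of the proposition on $\alpha$-connections, $2\nabla^{(\alpha)} = (1+\alpha)D + (1-\alpha)D^*$. With this route it suffices to compute two connections in the polar frame $\{\partial_\kappa\} \cup \Gamma(T\mathbb{S}^{n-1})$ (vector fields on the sphere extended to be independent of $\kappa$). The first is the exponential connection $\nabla^{(1)} = \econ$, which is just the flat connection of $\theta = \kappa\mu$ in polar coordinates. The second is the mixture connection $\nabla^{(-1)} = \mcon$, obtained by duality or, equivalently, as $\econ - 2K$ with the difference tensor of the preceding proposition. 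Each claimed formula is affine in $\alpha$, so checking it at $\alpha = \pm 1$ is enough.

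\textbf{Step 1: the exponential connection.} The flat connection of $\mathbb{R}^n\setminus\{0\}$ written in polar coordinates gives:
\begin{itemize}
\item $D_{\partial_\kappa}\partial_\kappa = 0$, since $\partial_\kappa = \mu$ is constant along rays;
\item $D_{\partial_\kappa}X = \frac{1}{\kappa}X$, since the $\kappa$-independent extension of $X$ has Euclidean length scaling like $\kappa$;
\item $D_X Y = \nabla^{g_{\mathbb{S}^{n-1}}}_X Y - \kappa\, g_{\mathbb{S}^{n-1}}(X,Y)\partial_\kappa$, the usual Gauss formula for the sphere of radius $\kappa$, whose second fundamental form with respect to $g_{\mathbb{S}^{n-1}}$ is $-\kappa g_{\mathbb{S}^{n-1}}$.
\end{itemize}
At $\alpha = 1$ these are exactly the three claimed expressions.

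\textbf{Step 2: the mixture connection.} I will use $D^* = D - 2K$ from Proposition~\ref{difften}(2), together with the explicit $K$ from the preceding proposition. Evaluating $K$ on the frame:
\begin{itemize}
\item $K_{\partial_\kappa}\partial_\kappa = -\tfrac12 \frac{A_n''}{A_n'}\partial_\kappa$;
\item $K_{\partial_\kappa}X = \tfrac12\bigl(\tfrac1\kappa - \tfrac{A_n'}{A_n}\bigr)X$, where the sym term contributes once because $d\kappa(X) = 0$ and $P_\mu\partial_\kappa = 0$;
\item $K_X Y = \tfrac12\bigl(\tfrac{A_n}{A_n'} - \kappa\bigr)g_{\mathbb{S}^{n-1}}(X,Y)\partial_\kappa$.
\end{itemize}
Subtracting twice these from the $D$ values gives:
\begin{itemize}
\item $D^*_{\partial_\kappa}\partial_\kappa = \frac{A_n''}{A_n'}\partial_\kappa$;
\item $D^*_{\partial_\kappa}X = \frac{A_n'}{A_n}X$;
\item $D^*_X Y = \nabla^{g_{\mathbb{S}^{n-1}}}_X Y - \frac{A_n}{A_n'}g_{\mathbb{S}^{n-1}}(X,Y)\partial_\kappa$.
\end{itemize}
These match the claims at $\alpha = -1$.

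\textbf{Step 3 and the main obstacle.} Averaging the two connections with weights $(1\pm\alpha)/2$ then yields the three formulas. The main obstacle is the sign and normalization bookkeeping in $K$: one has to interpret $\mathrm{sym}(P_\mu\otimes d\kappa)$ with $P_\mu$ acting as the identity on sphere directions, and confirm that the sphere tangent vectors are measured with $g_{\mathbb{S}^{n-1}}$ rather than the Euclidean metric. As a safeguard I will independently verify $D^*$ through duality with $g^F = A_n'd\kappa^2 + \kappa A_n g_{\mathbb{S}^{n-1}}$. For example, $\partial_\kappa\, g^F(X,Y) = (\kappa A_n)'g_{\mathbb{S}^{n-1}}(X,Y)$ must equal $g^F(D_{\partial_\kappa}X, Y) + g^F(X, D^*_{\partial_\kappa}Y) = (A_n + \kappa A_n')g_{\mathbb{S}^{n-1}}(X,Y)$, which is consistent. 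Likewise $X\,g^F(Y,\partial_\kappa) = 0$ checks the normal component $-\frac{A_n}{A_n'}$.
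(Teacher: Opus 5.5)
Your proposal is correct, and the frame values you list all check out. For instance, $D^*_{\partial_\kappa}X=\frac{A_n'}{A_n}X$ and $D^*_XY=\nabla^{g_{\mathbb{S}^{n-1}}}_XY-\frac{A_n}{A_n'}g_{\mathbb{S}^{n-1}}(X,Y)\partial_\kappa$ are exactly what duality with $g^F$ forces. Your reading of $\mathrm{sym}(P_\mu\otimes d\kappa)$ as the unnormalized two-term sum is the one consistent with $K=D-\nabla^{g^F}$. One small point on the normal-component check: pair it as $X g^F(\partial_\kappa,Y)=g^F(D_X\partial_\kappa,Y)+g^F(\partial_\kappa,D^*_XY)$, with $D_X\partial_\kappa=\frac1\kappa X$ from torsion-freeness and $[X,\partial_\kappa]=0$. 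This gives $A_n g_{\mathbb{S}^{n-1}}(X,Y)+A_n'\cdot\bigl(-\frac{A_n}{A_n'}\bigr)g_{\mathbb{S}^{n-1}}(X,Y)=0$, as required.

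Your route differs from the paper's. The paper recognizes $g^F=A_n'd\kappa^2+\kappa A_n g_{\mathbb{S}^{n-1}}$ as a warped product and quotes O'Neill's formulas for the Levi-Civita connection. It then adds $\alpha K$ via the definition \eqref{alphacon}, using the difference-tensor proposition.

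You instead compute the two endpoints and interpolate with $2\nabla^{(\alpha)}=(1+\alpha)D+(1-\alpha)D^*$. The first endpoint, $\nabla^{(1)}=\econ$, is elementary: it is the flat connection of $\theta=\kappa\mu$ in polar form, i.e.\ the Gauss formula for spheres of radius $\kappa$. The second is $\nabla^{(-1)}=\mcon$.

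What your version buys:
\begin{itemize}
\item It avoids the warped-product machinery entirely.
\item With the duality safeguard, it is also independent of the difference-tensor proposition, which the paper only asserts as ``a simple computation.'' In effect you re-derive $K=\frac12(D-D^*)$ and $\nabla^{g^F}=\frac12(D+D^*)$.
\item It produces the e- and m-connections explicitly, and these are the ones used later for the autoparallel results.
\end{itemize}
The paper's version is shorter given the cited warped-product structure, and it yields the Levi-Civita connection ($\alpha=0$) directly. However, it relies on both an external formula and the unproved expression for $K$.
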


\begin{proof}
    The Fisher metric of the vMF family admits a warped-product representation; see \cite{SaidBombrunBerthoumieu2019}.
    By applying the standard formulas for the Levi-Civita connection of a warped product \cite{ONeill1983}, we obtain
    \begin{align*}
        \nabla^{g^F}_{\frac{\partial}{\partial \kappa}} \frac{\partial}{\partial \kappa} &= \frac{1}{2}\frac{A^{''}_n(\kappa)}{A^{'}_n(\kappa)}\frac{\partial}{\partial \kappa},\\
        \nabla^{g^F}_{\frac{\partial}{\partial \kappa}} X &= \frac{1}{2}\left(\frac{1}{\kappa}+\frac{A^{'}_n(\kappa)}{A_n(\kappa)}\right)X,\\
        \nabla^{g^F}_X Y &= \nabla^{g_{\mathbb{S}^{n-1}}}_X Y - \frac{1}{2}\left(\kappa + \frac{A_n(\kappa)}{A^{'}_n(\kappa)}\right)g_{\mathbb{S}^{n-1}}(X,Y)\frac{\partial}{\partial\kappa},
    \end{align*}
    where $X,Y\in \Gamma(T\mathbb{S}^{n-1})$.
    The definition \eqref{alphacon} of $\alpha$-connections completes the proof.
\end{proof}

\newpage

\subsection{Canonical divergence of the vMF manifold}

Since the vMF manifold is an exponential family, the Bregman divergence is the Kullback-Leibler divergence.
We begin by computing the expectation coordinate.
By \eqref{expcor}, the expectation coordinate $\eta=(\eta_1,\ldots,\eta_n)$ of $\vmf$ without the origin is
\begin{align*}
    \eta_i(p_{\theta}) = \frac{A_n(\kappa)}{\kappa}\theta^i,\quad i=1,\ldots,n,\quad p_{\theta}\in\vmf\setminus\{p_0\}.
\end{align*}
As $\kappa\to0$, the function $A_n(\kappa)$ is order $\kappa$, that is,
\begin{align*}
    A_n(\kappa) = \frac{\kappa}{n} + O(\kappa^3),\quad \mathrm{as}\quad\kappa\to 0.
\end{align*}
Therefore, the limit $\lim_{\kappa\to 0} \frac{A_n(\kappa)}{\kappa} = \frac{1}{n}$ holds \cite{Watson1983}, and concequently we have $\eta_i(p_{0}) = 0$.
With the following lemma, the range of the expectation coordinate $\eta$ is $B_n^{\circ}(1)$, the open ball of range $1$ with center $0$.\\

\begin{lemma}\label{lem:An_strict_monotonicity}
For any dimension $n \ge 2$, the concentration ratio
function
\[
A_n(\kappa)
= \frac{I_{n/2}(\kappa)}{I_{n/2-1}(\kappa)}
\]
is strictly increasing on $[0,\infty)$ with
\[
\lim_{\kappa\to\infty} A_n(\kappa)=1.
\]
Consequently, $A_n:[0,\infty)\to[0,1)$ is a real-analytic diffeomorphism, and its inverse $A_n^{-1}:[0,1)\to[0,\infty)$ is strictly increasing and real-analytic.
\end{lemma}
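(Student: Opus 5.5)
The cleanest route is to exploit the exponential-family structure already set up rather than Bessel-function inequalities directly. For $\theta=\kappa e_1$ we have $\partial\psi/\partial\theta^1 = A_n(\kappa)$ by the computation of $\eta$ above, so
\[
A_n(\kappa)=\mathbb{E}_{p_{\kappa e_1}}[x_1],\qquad A_n'(\kappa)=\frac{\partial^2\psi}{(\partial\theta^1)^2}(\kappa e_1)=\mathrm{Var}_{p_{\kappa e_1}}(x_1).
\]
The second equality is the standard identity that the Hessian of the cumulant generating function is the covariance of the sufficient statistic $x$. Since $x_1$ is not almost surely constant on $\mathbb{S}^{n-1}$ for $n\ge 2$ under a measure with positive density, the variance is strictly positive. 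Hence $A_n'(\kappa)>0$ for all $\kappa\ge 0$, and $A_n$ is strictly increasing on $[0,\infty)$. At $\kappa=0$ this matches $A_n'(0)=1/n$ from the stated expansion $A_n(\kappa)=\kappa/n+O(\kappa^3)$.

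Real analyticity on $[0,\infty)$ also comes from the setup. The function $\psi$ is real analytic on all of $\mathbb{R}^n$, being the logarithm of a positive entire function of $\theta$. Therefore $\kappa\mapsto \partial_1\psi(\kappa e_1)=A_n(\kappa)$ is real analytic on $\mathbb{R}$, and in particular at $\kappa=0$, where it agrees with the piecewise definition by the limit already noted. Alternatively, $A_n(\kappa)=\kappa\, \frac{\sum_k a_k\kappa^{2k}}{\sum_k b_k\kappa^{2k}}$ from the series, with a nonvanishing denominator. Since $A_n'>0$ everywhere, the real-analytic inverse function theorem makes $A_n$ a real-analytic diffeomorphism onto its image, with real-analytic, strictly increasing inverse.

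It remains to show that the image is $[0,1)$, i.e.\ $A_n(\kappa)<1$ and $A_n(\kappa)\to 1$. The upper bound is immediate: $A_n(\kappa)=\mathbb{E}[x_1]<1$, since $x_1\le 1$ with equality only on a null set. The limit is the main obstacle. I would prove it by concentration. Fix $\varepsilon>0$ and split $\mathbb{E}[1-x_1]$ over $\{x_1\ge 1-\varepsilon\}$ and its complement. The first region contributes at most $\varepsilon$. On the complement the density ratio is at most
\[
\frac{e^{\kappa(1-\varepsilon)}}{\int_{\{x_1\ge 1-\varepsilon/2\}} e^{\kappa x_1}\,dx}\le \frac{e^{-\kappa\varepsilon/2}}{\mathrm{Vol}\{x_1\ge1-\varepsilon/2\}},
\]
which tends to $0$ as $\kappa\to\infty$. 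Hence $\limsup_{\kappa\to\infty}(1-A_n(\kappa))\le\varepsilon$ for every $\varepsilon>0$. Alternatively, one could quote the large-argument asymptotics $I_\nu(\kappa)\sim e^\kappa/\sqrt{2\pi\kappa}$, independent of $\nu$, which give the ratio limit $1$ directly. Combined with monotonicity and continuity, the intermediate value theorem gives $A_n([0,\infty))=[0,1)$.
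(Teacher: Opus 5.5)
Your proof is correct, but it takes a different route from the paper. The paper's proof is a one-line appeal to the literature: strict monotonicity and the limit $A_n(\kappa)\to 1$ are quoted from known properties of ratios of modified Bessel functions (Hornik--Gr\"un), and the diffeomorphism claim then comes from the real-analytic inverse function theorem.

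You instead derive everything from the exponential-family structure the paper has already set up:
\begin{itemize}
\item $A_n(\kappa)=\partial_1\psi(\kappa e_1)=\mathbb{E}_{p_{\kappa e_1}}[x_1]$ and $A_n'(\kappa)=\mathrm{Var}_{p_{\kappa e_1}}(x_1)>0$ give strict monotonicity;
\item $x_1\le 1$, with equality only on a null set, gives $A_n<1$;
\item a Laplace-type concentration estimate gives the limit;
\item analyticity of $\psi$ on all of $\mathbb{R}^n$ gives analyticity of $A_n$ on $\mathbb{R}$. This also settles the point $\kappa=0$, where the paper's piecewise definition leaves analyticity implicit.
\end{itemize}
In the concentration step, after bounding the density pointwise on $\{x_1<1-\varepsilon\}$ you still have to integrate $1-x_1\le 2$ over the sphere. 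This only adds a factor $2\,\mathrm{Vol}(\mathbb{S}^{n-1})$ and does not change the conclusion.

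The paper's route is shorter, but it is not self-contained and does not explain why the statement holds. Yours is self-contained. It needs no Bessel-function inequalities or asymptotics, only the identity $\partial_i\psi=A_n(\kappa)\theta^i/\kappa$ that the paper derives earlier. It also shows that the lemma is the radial instance of a general fact: the gradient of the cumulant function maps onto the interior of the convex hull of the support of the sufficient statistic. That is exactly how the paper uses the lemma, namely to identify the range of $\eta$ with $B_n^\circ(1)$.
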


\begin{proof}
The assertions follow from the known properties of ratios of modified Bessel functions (see \cite[Sections 1 and 2]{HornikGrun2014}), together with the real-analytic inverse function theorem.
\end{proof}

\vspace{\baselineskip}

\begin{proposition}
    For $\theta_1 = (\mu_1,\kappa_1),\theta_2 = (\mu_2,\kappa_2)\in\mathbb{R}^n$, the Kullback-Leibler divergence between $p_{\theta_1},p_{\theta_2}\in\mathcal{M}^n_{\mathrm{vMF}}$ is
    \begin{align}
        \label{vmfkl}
        \mathrm{KL}(p_{\theta_1}\|p_{\theta_2}) &= \psi(\theta_2) - \psi(\theta_1) + A_n(\kappa_1)(\kappa_1-\kappa_2\langle\mu_1,\mu_2\rangle)\\
        &= \log\frac{C_n(\kappa_1)}{C_n(\kappa_2)} + A_n(\kappa_1)(\kappa_1-\kappa_2\langle\mu_1,\mu_2\rangle).
    \end{align}
\end{proposition}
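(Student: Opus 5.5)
We would prove the formula directly from the exponential family form $p_\theta(x)=\exp(\langle x,\theta\rangle-\psi(\theta))$. Taking logarithms gives
\begin{align*}
\log\frac{p_{\theta_1}(x)}{p_{\theta_2}(x)} = \langle x,\theta_1-\theta_2\rangle - \psi(\theta_1)+\psi(\theta_2),
\end{align*}
so integrating against $p_{\theta_1}$ yields
\begin{align*}
\mathrm{KL}(p_{\theta_1}\|p_{\theta_2}) = \psi(\theta_2)-\psi(\theta_1)+\langle \mathbb{E}_{p_{\theta_1}}[x],\theta_1-\theta_2\rangle.
\end{align*}
Alternatively, this is the Bregman divergence \eqref{candi}: write $\varPhi(p_{\theta_1})=\langle\theta_1,\eta(p_{\theta_1})\rangle-\psi(\theta_1)$ by \eqref{exppot}, and the two expressions coincide.

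The key step is to identify the mean $\mathbb{E}_{p_{\theta_1}}[x]$, which is the expectation coordinate $\eta(p_{\theta_1})$. By differentiating $\psi$ under the integral sign, $\partial\psi/\partial\theta^i=\mathbb{E}_{p_\theta}[x^i]$. The computation already given in the proof of the Fisher metric proposition shows that this equals $\frac{A_n(\kappa)}{\kappa}\theta^i=A_n(\kappa)\mu^i$ for $\theta\neq0$, and it equals $0$ at $\theta=0$ by the limit $A_n(\kappa)/\kappa\to 1/n$. Hence $\eta(p_{\theta_1})=A_n(\kappa_1)\mu_1$.

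Substituting $\theta_j=\kappa_j\mu_j$ gives
\begin{align*}
\langle A_n(\kappa_1)\mu_1,\kappa_1\mu_1-\kappa_2\mu_2\rangle = A_n(\kappa_1)(\kappa_1-\kappa_2\langle\mu_1,\mu_2\rangle),
\end{align*}
using $\|\mu_1\|=1$. This proves the first line of \eqref{vmfkl}. The second line follows from $\psi(\theta)=-\log C_n(\|\theta\|)$, which gives $\psi(\theta_2)-\psi(\theta_1)=\log\frac{C_n(\kappa_1)}{C_n(\kappa_2)}$.

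The only delicate point is degenerate polar coordinates. When $\kappa_1=0$ or $\kappa_2=0$, the direction $\mu_j$ is undefined. However, every term involving $\mu_j$ is multiplied by $\kappa_j$ or by $A_n(\kappa_1)=0$, so the formula remains valid for any choice of $\mu_j$, with $C_n(0)$ as in \eqref{eq:vmf_normalization}. We would remark on this explicitly. Beyond that, no real obstacle arises; differentiating under the integral is justified because $\mathbb{S}^{n-1}$ is compact and the integrand is smooth.
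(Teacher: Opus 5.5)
Your proposal is correct and takes essentially the paper's route. The paper computes $\varPhi(p_\theta)=\kappa A_n(\kappa)-\psi(\theta)$ from \eqref{exppot} and substitutes it into the Bregman divergence \eqref{candi}. Your direct integration of $\log(p_{\theta_1}/p_{\theta_2})$ against $p_{\theta_1}$ rederives that same identity, and in both arguments the key input is $\eta(p_\theta)=A_n(\kappa)\mu$. Your explicit check of the degenerate cases $\kappa_1=0$ and $\kappa_2=0$ is a small addition that the paper leaves implicit.
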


\begin{proof}
    We begin by computing the potential $\varPhi$ of $g^F$ with respect to $\mcon$.
    By \eqref{exppot}, we obtain
    \begin{align*}
        \varPhi(p_{\theta}) = \kappa A_n(\kappa) - \psi(\theta),\quad p_{\theta}\in\vmf.
    \end{align*}
    Since the Kullback Leibler divergence is the Bregman divergence~\eqref{candi} on exponential families, we obtain the desired equation.
\end{proof}

If the two vMF distributions $p_{\theta_1},p_{\theta_2}$ have the same concentration $\kappa$, then the Kullback-Leibler divergence is symmetric, and is equal to
\begin{align*}
    \mathrm{KL}(p_{\theta_1}\|p_{\theta_2}) = \kappa A_n(\kappa)(1-\langle\mu_1,\mu_2\rangle).
\end{align*}
In the last term, we have $\langle\mu_1,\mu_2\rangle = \cos(d_{\mathbb{S}^{n-1}}(\mu_1,\mu_2))$, where $d_{\mathbb{S}^{n-1}}$ is the distance function of $(\mathbb{S}^{n-1},g_{\mathbb{S}^{n-1}})$.\\

On the other hand, if $p_{\theta_1},p_{\theta_2}$ have the same mean direction $\mu$, then the Kullback-Leibler divergence is 
\begin{align*}
    \mathrm{KL}(p_{\theta_1}\|p_{\theta_2}) = \log\frac{C_n(\kappa_1)}{C_n(\kappa_2)} + A_n(\kappa_1)(\kappa_1-\kappa_2).
\end{align*}

\bigskip

\section{Submanifolds of the vMF manifold and relations with the KL divergence}
\label{sec:submanifolds}
Identifying $\econ$ and $\mcon$-autoparallel submanifolds of exponential families is important because the KL divergence minimization is obtained by projection and Pythagorean characterizations.

\subsection{Autoparallel submanifolds}

The $\nabla^{\mathrm{(e)}}$-autoparallel and $\nabla^{(\mathrm{m})}$-autoparallel submanifolds of $\mathcal{M}^n_{\mathrm{vMF}}$ can be determined using their global affine coordinates.\\
\begin{proposition}
    \label{eautoparallel}
    A submanifold $M\subset\vmf$ is a $\nabla^{\mathrm{(e)}}$-autoparallel submanifold if and only if there exists a plane $P\subset\mathbb{R}^n$ such that $M$ is an open set of $\mathcal{M}_{P}=\{p_{\theta}\in\mathcal{M}_{\mathrm{vMF}}^n\mid \theta\in P\}$.\\
\end{proposition}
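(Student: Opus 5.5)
The plan is to reduce the statement to the Remark following the definition of $D$-autoparallel submanifolds, applied to the Hessian manifold $(\vmf,\econ,g^F)$ with $D=\econ$. By Example~\ref{expo}, $\econ$ is the standard flat connection in the natural coordinate $\theta$, i.e.\ $\econ d\theta^i=0$. The global chart is $\theta(\vmf)=\mathbb{R}^n$, which is convex and open. Hence the Christoffel symbols of $\econ$ vanish identically in $\theta$, and $\econ$-geodesics are exactly the curves $t\mapsto p_{\theta_0+tv}$.

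For the ``if'' direction, let $P=\theta_0+V$ with $V\subset\mathbb{R}^n$ a linear subspace of dimension $m$, and let $M$ be open in $\mathcal{M}_P$. Choose a basis $v_1,\dots,v_m$ of $V$ and use the coordinates $u\mapsto p_{\theta_0+\sum_a u^a v_a}$ on $M$. Then $\iota_*\partial_{u^a}=\sum_i v_a^i\,\partial_{\theta^i}$ has constant components, so $\econ_{\partial_{u^a}}\iota_*\partial_{u^b}=0$. Since $g^F$ is positive definite, projecting via \eqref{submanifold} gives $\nabla_{\partial_{u^a}}\partial_{u^b}=0$. Therefore $\iota_*\nabla_XY=\econ_X\iota_*Y$ holds on coordinate fields, and by tensoriality in $X$ and the Leibniz rule in $Y$ it holds for all $X,Y\in\Gamma(TM)$. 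Alternatively, one can use item (3) of the proposition on submanifolds of Hessian manifolds: for $(p_\theta,v)\in TM$, the geodesic $t\mapsto p_{\theta+tv}$ stays in $\mathcal{M}_P$, and for small $t$ it stays in the open set $M$.

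For the ``only if'' direction, suppose $M$ is $\econ$-autoparallel, connected, and $m$-dimensional. In $\theta$-coordinates the autoparallel condition says that the second fundamental form $\econ_X\iota_*Y-\iota_*\nabla_XY$ vanishes, i.e.\ the Euclidean second derivatives of any local parametrization $\theta(u)$ lie in the tangent space. I will show that the tangent space $T_pM\subset\mathbb{R}^n$ is locally constant. Take a local frame $E_a=\partial\theta/\partial u^a$. Then $\partial_b E_a=\Gamma^c_{ab}E_c$ lies in the span, so the Grassmannian-valued map $p\mapsto T_pM$ has zero derivative. By connectedness it equals a fixed $V$. The map $\theta\circ\iota$ then has differential valued in $V$, so $\theta(M)$ lies in $\theta(p_0)+V=:P$. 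Since $\dim M=\dim P$, $M$ is an open subset of $\mathcal{M}_P$ by invariance of domain. If $M$ is not assumed connected, I either apply the argument componentwise, or interpret the statement with $M$ connected, as the Remark implicitly does.

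The only real obstacle is this rigidity step showing that $T_pM$ is constant. I would do it concretely: choose a unit normal field $N$ in Euclidean terms and show $\partial_a\langle E_b,N\rangle$ forces $\partial_a N\in V^\perp$, or more simply cite the Remark directly, since it is exactly this statement for any global convex affine chart. Everything else is immediate from $\econ$ being flat in $\theta$ with $\theta(\vmf)=\mathbb{R}^n$.
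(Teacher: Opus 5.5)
Your proposal is correct and takes the same route as the paper. The paper's proof only observes that $\theta$ is a global affine coordinate for $\econ$, in effect invoking the Remark on $D$-autoparallel submanifolds; you also supply the rigidity argument behind that Remark, namely that a vanishing second fundamental form forces $T_pM$ to be constant. Your caveat that the ``only if'' direction needs $M$ connected is justified, since the statement as written omits this hypothesis and two disjoint open pieces of distinct parallel planes would otherwise be a counterexample.
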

\begin{proof}
    It follows from the fact that the natural coordinate $\theta$ is an affine coordinate system of $\nabla^{(\mathrm{e})}$.
\end{proof}

\vspace{\baselineskip}


For convenience, we define a function $f_n:[0,1)\to[n,\infty)$ by
\begin{align*}
        f_n(r) =
        \begin{cases}
        \frac{A^{-1}_n(r)}{r}, & \text{if } 0 < r < 1 \\ 
        n, & \text{if } r = 0.
        \end{cases}
\end{align*}

\begin{proposition}
    \label{mautoparallel}
    A submanifold $M\subset\vmf$ is a $\mcon$ autoparallel submanifold if and only if there exists $a\in B^{\circ}_n(1)$ and a linear subspace $V\in\mathbb{R}^n$ with $\langle a,v \rangle = 0$ for all $v\in V$, such that $M$ is an open set of $\mathcal{M}_{H}=\{p_{\theta}\in\vmf\mid \theta\in H\}$, where
    \begin{align}
        \label{mplane}
        H = \left\{F(v) \mid v\in V\cap B_n^{\circ}(\sqrt{1-\|a\|^2}) \right\},
    \end{align}
    and 
    \begin{align}
        \label{inversemap}
        F(v)=f_n(\sqrt{\|a\|^2+\|v\|^2})(a+v),\quad v\in V\cap B_n^{\circ}(\sqrt{1-\|a\|^2}).
    \end{align}
\end{proposition}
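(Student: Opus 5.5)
The plan is to work in the expectation coordinate $\eta$, which is the global affine coordinate of $\mcon$. By the dual version of the Remark following the definition of autoparallel submanifolds, $M$ is $\mcon$-autoparallel if and only if $\eta(M)$ is an open subset of an affine plane $Q\subset\mathbb{R}^n$ intersected with the range $B_n^{\circ}(1)$ of $\eta$. So it suffices to compute the inverse map $\eta\mapsto\theta$ explicitly and to describe the image under it of an arbitrary affine plane meeting the open unit ball.

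\textbf{Inverse of $\eta$.} From $\eta(p_\theta)=\frac{A_n(\kappa)}{\kappa}\theta$ we see that $\eta$ is parallel to $\theta$, with $\|\eta\|=A_n(\kappa)$. By Lemma~\ref{lem:An_strict_monotonicity} this gives $\kappa=A_n^{-1}(\|\eta\|)$. Hence for $\eta\neq0$,
\begin{align*}
\theta=\frac{A_n^{-1}(\|\eta\|)}{\|\eta\|}\,\eta=f_n(\|\eta\|)\,\eta .
\end{align*}
At $\eta=0$ this reads $\theta=0$, consistent with $f_n(0)=n$ and the limit $A_n(\kappa)/\kappa\to 1/n$. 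This shows that the map $\eta\mapsto f_n(\|\eta\|)\eta$ is the inverse of $\eta$ on $B_n^{\circ}(1)$. It is a diffeomorphism because the Hessian of $\psi$ is positive definite, so no separate smoothness argument for $f_n$ at $0$ is needed.

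\textbf{Normalizing the plane.} Any affine plane $Q$ meeting $B_n^{\circ}(1)$ can be written uniquely as $Q=a+V$, where $V$ is a linear subspace and $a$ is the point of $Q$ closest to the origin. Then $a\perp V$ and $\|a\|<1$. For $v\in V$ we have $\|a+v\|^2=\|a\|^2+\|v\|^2$ by orthogonality. Thus $a+v\in B_n^{\circ}(1)$ if and only if $v\in V\cap B_n^{\circ}(\sqrt{1-\|a\|^2})$, and then
\begin{align*}
\theta(a+v)=f_n\bigl(\sqrt{\|a\|^2+\|v\|^2}\bigr)(a+v)=F(v).
\end{align*}
Therefore $\theta(Q\cap B_n^{\circ}(1))=H$ as in \eqref{mplane}.

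\textbf{Concluding both directions.} If $M$ is $\mcon$-autoparallel, then $\eta(M)$ is open in some $Q\cap B_n^{\circ}(1)$. Since $\theta\circ\eta^{-1}$ is a homeomorphism, $M$ is open in $\mathcal{M}_H$. Conversely, if $M$ is open in $\mathcal{M}_H$, then $\eta(M)$ is an open piece of the plane $a+V$, so $M$ is $\mcon$-autoparallel. The only real care needed is the orthogonal decomposition that makes the radius $\sqrt{1-\|a\|^2}$ appear and keeps $a$ unique. The rest is routine.
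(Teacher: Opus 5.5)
Your proposal is correct and follows the same route as the paper: pass to the expectation coordinate, where $\mcon$-autoparallel submanifolds are open pieces of affine planes meeting $B_n^{\circ}(1)$, write such a plane as $a+V$ with $a\perp V$ and $\|a\|<1$, and pull back via $\theta=f_n(\|\eta\|)\eta$. You fill in details the paper only sketches, and all of them are sound: you derive $\theta=f_n(\|\eta\|)\eta$ from $\|\eta\|=A_n(\kappa)$, you get smoothness of the inverse from positive-definiteness of the Hessian, and you treat the orthogonal decomposition giving the radius $\sqrt{1-\|a\|^2}$ and the converse direction explicitly.
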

\begin{proof}
    On the $\eta$ coordinate, a $\mcon$-autoparallel manifold $M$ is an open subset of some planes $P$ in $\mathbb{R}^n$ restricted on $B^{\circ}_n(1)$. 
    Therefore, there exists $P=\{a+v\mid v\in V\}$ where $a\in B^{\circ}_n(1)$ and a linear subspace $V\subset\mathbb{R}^n$ with $\langle a,v \rangle=0$ for all $v\in V$, such that $M$ is an open set of $P\cap B^{\circ}_n(1) = \{a+v\mid  v\in B^{\circ}_n(\sqrt{1-\|a\|^2})\}$.
    The rest follows from the expression of the natural coordinate by
    \begin{align}
        \label{exptonat}
        \theta^i = f_n(\|\eta\|)\eta_i,\quad i=1,\ldots,n,
    \end{align}
    using the expectation coordinate.
\end{proof}

\vspace{\baselineskip}


With Propositions \ref{eautoparallel} ans \ref{mautoparallel}, we obtain the doubly autoparallel submanifolds of the vMF manifold.\\

\begin{theorem}
    \label{daautoparallel}
    Let $M^m\subset\vmf$ be a connected, posistive--dimensional submanifold.
    Then $M$ is doubly autoparallel if and only if there exists a linear subspace $V\subset\mathbb{R}^n$ such that $M$ is an open set of $\mathcal{M}_V = \{p_{\theta}\in\vmf\mid\theta\in V\}$.\\
\end{theorem}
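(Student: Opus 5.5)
We argue in the two global affine charts. The natural coordinate $\theta$ is $\econ$-affine on $\mathbb{R}^n$, and the expectation coordinate $\eta$ is $\mcon$-affine on $B^{\circ}_n(1)$. The map between them is radial: $\eta = \frac{A_n(\|\theta\|)}{\|\theta\|}\theta$, with inverse \eqref{exptonat}, $\theta = f_n(\|\eta\|)\eta$. In particular it sends each ray from the origin to itself and each sphere centred at the origin to a sphere centred at the origin, by the strict monotonicity in Lemma~\ref{lem:An_strict_monotonicity}. By Propositions~\ref{eautoparallel} and~\ref{mautoparallel}, $M$ is doubly autoparallel if and only if $\theta(M)$ is an open subset of an $m$-plane $P$ and $\eta(M)$ is an open subset of an $m$-plane $Q$.

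\emph{Sufficiency.} Suppose $M$ is open in $\mathcal{M}_V$ for a linear subspace $V$. Then $\theta(M)\subset V$ is open in a plane, so $M$ is $\econ$-autoparallel. The radial map preserves $V$, since $\eta$ is a positive scalar multiple of $\theta$. It is a diffeomorphism of $V$ onto $V\cap B_n^\circ(1)$, so $\eta(M)$ is open in $V$. This is exactly the case $a=0$ of Proposition~\ref{mautoparallel}, and hence $M$ is also $\mcon$-autoparallel.

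\emph{Necessity.} Write $P=\theta_0+W$ with $W$ linear of dimension $m\ge1$. The plan is to show $0\in P$, since then $P=W$ is linear and $M$ is open in $\mathcal{M}_W$. Take a point $\theta_0\in\theta(M)$ and any line segment $\theta(t)=\theta_0+tw$ in $\theta(M)$ with $w\in W$. Its image $\eta(t)=\phi(\|\theta(t)\|)\theta(t)$, where $\phi(\kappa)=A_n(\kappa)/\kappa$, must lie in $Q$. Since $\theta(t)$ lies in $\mathrm{span}(\theta_0,w)$, the curve $\eta(t)$ lies in this 2-plane through the origin. We consider two cases.

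\emph{Case 1: $\theta_0$ and $w$ are linearly dependent.} Then the line is radial and already passes through $0$.

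\emph{Case 2: $\theta_0$ and $w$ are linearly independent.} The curve $\eta(t)$ lies in the affine set $Q\cap\mathrm{span}(\theta_0,w)$. We want to show that, unless $\phi(\|\theta(t)\|)$ is constant along the segment, the points $\eta(t)$ are not collinear. Concretely, $\eta(t)=\phi(t)(\theta_0+tw)$ in the basis $(\theta_0,w)$ has coordinates $(\phi,t\phi)$. Three such points are collinear exactly when $t\mapsto 1/\phi$ is affine along the line, i.e. the radial projection of a line is a line. So $Q\cap\mathrm{span}$ either contains a 2-dimensional open set, or it is a line.

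The main obstacle is ruling out both possibilities. For the first, we would compare dimensions: if $Q$ contains an open piece of $\mathrm{span}(\theta_0,w)$ for every direction $w\in W$, then $Q\supset\mathrm{span}(\theta_0,W)$ locally. This has dimension $m+1$ unless $\theta_0\in W$, contradicting $\dim Q=m$. For the second, we would need $1/\phi(\|\theta_0+tw\|)$ to be affine in $t$. We plan to show this fails by expanding at the minimising point of $\|\theta_0+tw\|$: there $\|\theta\|^2$ is quadratic in $t$, and $1/\phi(\kappa)=\kappa/A_n(\kappa)$ is a non-constant, even, analytic function of $\kappa$. Near $\kappa=0$ we have $1/\phi(\kappa)=n+c\kappa^2+\dots$ with $c\neq0$, and for $\kappa$ bounded away from $0$ strict monotonicity (Lemma~\ref{lem:An_strict_monotonicity}) forces $1/\phi$ to be non-constant. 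Composing with the quadratic $t\mapsto\|\theta_0+tw\|^2$ gives a function with vanishing first derivative and, by analyticity, a non-affine germ, so it cannot be affine. Connectedness of $M$ then upgrades the local conclusion $\theta_0\in W$ to all of $M$.
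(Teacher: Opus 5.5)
Your proof is correct and takes a genuinely different route from the paper's. The paper starts from the $\eta$-side. It writes $\eta(M)$ as an open piece of $a+V$ with $a\perp V$ and pulls this back via $F(v)=f_n(\sqrt{\|a\|^2+\|v\|^2})(a+v)$. Analyticity of $F$ then extends the containment $F(U)\subset b+W$ to the whole disc $D$. Finally the central symmetry $v\mapsto -v$ of $D$ does the rest:
\begin{itemize}
\item $F(v)-F(-v)=2f_n(\cdot)\,v\in W$ gives $V=W$;
\item the midpoint gives $f_n(\sqrt{\|a\|^2+\|v\|^2})\,a=b$ on $D$;
\item $f_n(r)\to\infty$ as $r\to 1$ forces $a=0$.
\end{itemize}
You start from the $\theta$-side and cut by the $2$-planes $\mathrm{span}(\theta_0,w)$. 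You isolate a one-variable lemma: the radial map $\theta\mapsto\eta$ never sends a segment of a line missing the origin into a line. A dimension count then finishes. Both proofs rest on the same two ingredients: analytic continuation, and reflection symmetry about the foot of the perpendicular from the origin (the paper's $a$, your $t^*$). The paper's version obtains $V=W$ and $a=b=0$ in one sweep without splitting into cases. Yours makes the mechanism transparent, namely that images of non-radial lines are curved.

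Two justifications in your sketch should be tightened.

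First, strict monotonicity of $A_n$ does not prevent $\kappa/A_n(\kappa)$ from being constant on an interval. That would only mean $A_n(\kappa)=\kappa/c$ there, which is increasing. Use instead $A_n<1$, so $\kappa/A_n(\kappa)>\kappa$, or your expansion $n+\kappa^2/(n+2)+\cdots$ together with analyticity.

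Second, $t^*$ need not lie in the small segment contained in $\theta(M)$. You must first continue the affine identity analytically to all of $\mathbb{R}$. This is legitimate because $\theta_0+tw\neq 0$ for every $t$ in Case~2, so $G(t)=\|\theta_0+tw\|/A_n(\|\theta_0+tw\|)$ is analytic on $\mathbb{R}$. Once that is done, $G(t)>\|\theta_0+tw\|\to\infty$ as $t\to\pm\infty$ already rules out an affine $G$, with no need for the critical point at $t^*$.

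Also, connectedness is only used to obtain single planes $P$ and $Q$. Once $0\in P$ at one point, $P=W$ and you are done, so no further upgrade step is needed.
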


\begin{proof}
Let $M^m$ be a connected, positive-dimensional doubly autoparallel submanifold of $\vmf$.
Since $M$ is a $\mcon$-autoparallel submanifold,
there exist $a\in B_n^\circ(1)$ and an $m$-dimensional
linear subspace $V\subset\mathbb{R}^n$ with
$\langle a,v\rangle=0$ for all $v\in V$
such that $M$ is an open submanifold of $\mathcal{M}_H$,
where $H$ is defined by \eqref{mplane}.
Since $M$ is also a $\econ$-autoparallel submanifold,
there exist $b\in\mathbb{R}^n$ and an $m$-dimensional
linear subspace $W\subset\mathbb{R}^n$ with
$\langle b,w\rangle=0$ for all $w\in W$
such that
\[
  M\subset\{p_{\theta}\in\vmf\mid \theta\in P^{'}\}
\]
where $P^{'}=\{b+w\mid w\in W\}$.
We prove that $V=W$ and $a=b=0$.
Put
\[
  D=V\cap B_n^\circ(\sqrt{1-\|a\|^2})
\]
and define $F:D\to\mathbb{R}^n$ by \eqref{inversemap}.
Since $F$ is the restriction of the inverse of the expectation-coordinate map, it is real analytic on $D$.
As $M$ is an open submanifold of $\mathcal{M}_H$, there exists a nonempty open subset $U\subset D$ such that $M=\{p_{F(v)}\mid v\in U\}$.
In particular, we have $F(v)\in P^{'}$ for every $v\in U$.
Denoting by $P_{W^\perp}$ the orthogonal projection
onto $W^\perp$, we have
\[
  P_{W^\perp}F(v)=b,\qquad v\in U.
\]
Since $D$ is connected and $F$ is real analytic, the identity theorem for real analytic functions implies that
\[
  P_{W^\perp}F(v)=b,\qquad v\in D,
\]
and consequently, $F(D)\subset P^{'}$.
For any $x,y\in P^{'}$, we have $x-y\in W$.
Therefore, for any $v\in D$ we have
\begin{align*}
  F(v)-F(-v)
  &=f_n(\sqrt{\|a\|^2+\|v\|^2})(a+v)-f_n(\sqrt{\|a\|^2+\|-v\|^2})(a-v)\\
  &=2f_n(\sqrt{\|a\|^2+\|v\|^2})v \in W.
\end{align*}
Since $f_n$ takes positive values, we obtain $V\subset W$. 
As $\dim V=\dim W=m$, it follows that $V=W$.
On the other hand, for any $x,y\in P^{'}$ it holds that $\frac{1}{2}(x+y)\in P^{'}$, thus for $v\in V\cap B^{\circ}_n(\sqrt{1-\|a\|^2})$, we have 
    \begin{align*}
        f_n(\sqrt{\|a\|^2+\|v\|^2})a&=\frac{1}{2}\left(f_n(\sqrt{\|a\|^2+\|v\|^2})(a+v) + f_n(\sqrt{\|a\|^2+\|-v\|^2})(a-v)\right)\in P^{'}.
    \end{align*}
which implies that $f_n(\sqrt{\|a\|^2+\|v\|^2})a = b$ for any $v\in V\cap B^{\circ}_n(\sqrt{1-\|a\|^2})$, from the projection on $V^{\perp}$.
Suppose that $a\neq0$.
Since $\dim V=m\ge1$, we can choose a sequence
$\{v_k\}\subset D$ such that
\[
  \|v_k\|\longrightarrow\sqrt{1-\|a\|^2},\quad \mathrm{as}\,k\longrightarrow\infty.
\]
Then
\[
  \sqrt{\|a\|^2+\|v_k\|^2}\longrightarrow1,\quad \mathrm{as}\,k\longrightarrow\infty.
\]
Since $f_n(r)\to\infty$ as $r\to1$,
we obtain
\[
  \|f_n(\sqrt{\|a\|^2+\|v_k\|^2})a\|,\quad \mathrm{as}\,k\longrightarrow\infty,
\]
which contradicts
\[
  f_n(\sqrt{\|a\|^2+\|v_k\|^2})a=b.
\]
Therefore, $a=0$, and hence $b=0$.
Thus $P~{'}=V$, and $M$ is an open submanifold of $\mathcal{M}_V$.\\

Conversely, for any linear subspace $V\subset\mathbb{R}^n$, 
The submanifold $\mathcal{M}_V$ is clearly a $\econ$-autoparallel submanifold.
    This submanifold in the expectation coordinate is 
    \begin{align*}
        \eta(\mathcal{M}_V) = V\cap B_n^{\circ}(1),
    \end{align*}
    which indicates that $\mathcal{M}_V$ is also a $\mcon$-autoparallel submanifold.
\end{proof}

\vspace{\baselineskip}

\begin{remark}
    From the definition of $\alpha$-connections \eqref{alphacon}, very doubly autoparallel submanifold of the vMF manifold is also autoparallel with respect to the $\alpha$-connection $\nabla^{(\alpha)}$ for all $\alpha\in\mathbb{R}$.
\end{remark}

\subsection{Projection theorem of the KL divergence on doubly autoparallel submanifolds}

Let $V\subset\mathbb{R}^n$ be a linear subspace.
The submanifold $\mathcal{M}_{V}\subset\vmf$ is doubly autoparallel by Theorem \ref{daautoparallel}.
For a vMF distribution $p_{\theta}\in\vmf$, we can consider two complementary projection problems$:$
identifying the point $p_{\xi}\in\mathcal{M}_V$ that minimizes $\mathrm{KL}(\cdot\|p_{\theta})$ (the $e$-projection) or $\mathrm{KL}(p_{\theta}\|\cdot)$ (the $m$-projection), whenever the minimum is attained.\\

\begin{proposition}
    \label{eproj}
    Let $V\in\mathbb{R}^n$ be a linear subspace.
    For an arbitrary $p_{\theta}\in\vmf$, the vMF distribution $p_{\xi}\in \mathcal{M}_{V}$ where $\xi=P_{V}\theta$ is the unique point where $\mathrm{KL}(\cdot\|p_{\theta})$ attains the minimum.
\end{proposition}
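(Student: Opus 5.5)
We want to minimize $\mathrm{KL}(p_{\xi'}\|p_\theta)$ over $p_{\xi'}\in\mathcal{M}_V$, i.e.\ $\rho(y,x)$ with $x=p_\theta$ and $y$ ranging over $S=\mathcal{M}_V$. By Theorem~\ref{daautoparallel}, $\mathcal{M}_V$ is $\mcon$-autoparallel. Part (2) of Corollary~\ref{klproj} then reduces the problem to finding $y=p_\xi\in\mathcal{M}_V$ such that the $\econ$-geodesic from $p_\theta$ to $p_\xi$ is $g^F$-perpendicular to $\mathcal{M}_V$ at $p_\xi$. The $\econ$-geodesic is the straight segment $t\mapsto p_{\xi+t(\theta-\xi)}$ in natural coordinates, with velocity $\theta-\xi$ at $p_\xi$.

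The key computation is the orthogonality condition. I use the duality \eqref{dualness}: the pairing between a $\theta$-coordinate vector $u=\sum u^i\partial/\partial\theta^i$ and an $\eta$-coordinate vector $w=\sum w_j\partial/\partial\eta_j$ is $g^F(u,w)=\langle u,w\rangle$ (Euclidean). Since $\mathcal{M}_V$ is $\mcon$-flat with $\eta(\mathcal{M}_V)=V\cap B_n^\circ(1)$ (shown in the proof of Theorem~\ref{daautoparallel}), its tangent space at any point is spanned, in $\eta$-coordinates, by the vectors $w\in V$. Hence the geodesic is perpendicular to $\mathcal{M}_V$ if and only if $\langle\theta-\xi,w\rangle=0$ for all $w\in V$, i.e.\ $\theta-\xi\in V^\perp$. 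Together with $\xi\in V$, this forces $\xi=P_V\theta$.

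For uniqueness, Corollary~\ref{klproj}(2) is an ``if and only if'', so any minimizer must satisfy the orthogonality condition, and that condition has exactly one solution in $V$. Existence also follows: with $\xi=P_V\theta$, Theorem~\ref{genpy} gives $\mathrm{KL}(p_{\zeta}\|p_\theta)=\mathrm{KL}(p_\zeta\|p_\xi)+\mathrm{KL}(p_\xi\|p_\theta)$ for every $p_\zeta\in\mathcal{M}_V$, because the $\mcon$-geodesic from $p_\zeta$ to $p_\xi$ stays in $\mathcal{M}_V$. This yields minimality directly, and strict minimality follows from $\rho(x,y)=0\iff x=y$.

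As a cross-check I can use \eqref{divdecom}: the cross term $\langle\theta-\xi,\eta(\zeta)-\eta(\xi)\rangle$ vanishes because $\eta(\zeta),\eta(\xi)\in V$ (since $\eta$ is a positive multiple of $\theta$) and $\theta-\xi\perp V$. I expect the main obstacle to be justifying that tangent vectors of $\mathcal{M}_V$ expressed in $\eta$-coordinates lie in $V$, and that the pairing is Euclidean. Both follow from $\eta=\frac{A_n(\kappa)}{\kappa}\theta$ preserving $V$ and from \eqref{dualness}. The direct \eqref{divdecom} argument avoids any subtlety about geodesics.
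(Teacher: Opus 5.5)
Your proof is correct and is essentially the paper's argument. The paper finds $\xi$ by differentiating $\mathrm{KL}(p_{\gamma(t)}\|p_\theta)$ along curves in $\mathcal{M}_V$, which gives the condition $\langle \xi-\theta,w\rangle=0$ for all $w\in V$; this is the same $\theta$--$\eta$ pairing you obtain from \eqref{dualness} via Corollary~\ref{klproj}(2), and global minimality and uniqueness are then proved exactly as in your cross-check, using \eqref{divdecom} with the cross term vanishing because $\theta-\xi\in V^\perp$ and $\eta(p_\zeta)-\eta(p_\xi)\in V$.
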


\begin{proof}
    For any $v\in V$ and $w\in T_{p_{v}}\mathcal{M}_{V} \simeq V$, we choose a curve $\gamma:(-\epsilon,\epsilon)\to \mathcal{M}_{V}$ such that $\gamma(0)=v$, and $\frac{d}{dt}(\eta\circ\gamma)(0) = w$.
    Then, from the definition of the canonical divergence \eqref{candi}, we have
    \begin{align*}
        \left.\frac{d}{dt}\mathrm{KL}(p_{\gamma(t)}\|p_{\theta})\right|_{t=0} &= \left.\frac{d}{dt}\left(\varphi(\eta\circ\gamma(t)) - \langle\theta,\eta\circ\gamma(t)\rangle\right)\right|_{t=0}\\
        &=\sum_{i=1}^n\frac{\partial \varphi}{\partial \eta_i}(\eta(v))\frac{\eta_i\circ\gamma}{dt}(0) - \langle\theta,w\rangle\\
        &=\langle v,w\rangle - \langle\theta,w\rangle\\
        &=\langle v-\theta,w\rangle.
    \end{align*}
    Therefore, $p_{\xi}\in\mathcal{M}_{V}$ is a critical point of the function $\mathrm{KL}(\cdot\|p_{\theta})$ if and only if the vector $\xi-\theta$ is perpendicular to $V$, which gives us $\xi=P_{V}\theta$.
    It remains to prove that $p_\xi$ is the unique
global minimizer.
By \eqref{divdecom}, for any $v\in V$, we have
\begin{align*}
    \mathrm{KL}(p_v\|p_\theta)
    &=
    \mathrm{KL}(p_v\|p_\xi)
    +\mathrm{KL}(p_\xi\|p_\theta)
    +\langle\xi-\theta,
    \eta(p_v)-\eta(p_\xi)\rangle\\
    &=
    \mathrm{KL}(p_v\|p_\xi)
    +\mathrm{KL}(p_\xi\|p_\theta),
\end{align*}
where the last equality follows from
$\xi-\theta\in V^\perp$ and
$\eta(p_v)-\eta(p_\xi)\in V$.
Since $\mathrm{KL}(p_v\|p_\xi)\geq 0$,
with equality if and only if $v=\xi$,
the point $p_\xi$ is the unique global minimizer
of $\mathrm{KL}(\cdot\|p_\theta)$ on $\mathcal{M}_V$.
    
\end{proof}

\newpage

Since doubly autoparallel submanifolds of the vMF manifolds are so simple, the next Corollary follows.\\

\begin{cor}
    \label{klcor}
    Let $V\subset\mathbb{R}^n$ be a linear subspace.
    For any perpendicular line $\{tv + (1-t)\theta\mid t\in\mathbb{R}\}$ of $V$ in $\mathbb{R}^n$, where $\theta\in\mathbb{R}^n\setminus V$ and $v = P_V\theta$, we have
    \begin{align*}
        g^F_{p_{v}}\left(v-\theta,w\right) = 0,\quad w\in T_{p_{v}}\mathcal{M}_{V}\simeq V
    \end{align*}
    for $v-\theta\in T_{p_{v}}\vmf\simeq\mathbb{R}^n$.
\end{cor}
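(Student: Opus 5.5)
The plan is a direct computation of the Fisher metric in the natural coordinate $\theta$, using only the fact that $\psi$ is radial. I identify $T_{p_v}\vmf$ with $\mathbb{R}^n$ through $\theta$. Then $g^F_{p_v}(X,W)=\sum_{i,j}\frac{\partial^2\psi}{\partial\theta^i\partial\theta^j}(v)X^iW^j$, since $g^F=\nabla^{(\mathrm{e})}d\Psi$.

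First I treat $v\neq 0$. The Hessian formula obtained in the proof of the proposition computing $g^F$ and $C^{AC}$, evaluated at $\theta=v$ with $\kappa=\|v\|$ and $\mu=v/\|v\|$, gives
\begin{align*}
g^F_{p_v}(X,W)=A_n'(\kappa)\langle X,\mu\rangle\langle W,\mu\rangle+\frac{A_n(\kappa)}{\kappa}\bigl(\langle X,W\rangle-\langle X,\mu\rangle\langle W,\mu\rangle\bigr).
\end{align*}
So the Hessian at $v$ is a linear combination of the identity and $\mu\mu^{\top}$. This is simply the $O(n)$-invariance of the radial function $\psi$.

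Next I insert $X=v-\theta$ and $w\in V$. Since $v=P_V\theta$, the vector $X=P_V\theta-\theta$ lies in $V^\perp$, so $\langle X,w\rangle=0$. Since $v\in V$, also $\mu\in V$, so $\langle X,\mu\rangle=0$. Both terms vanish, giving $g^F_{p_v}(v-\theta,w)=0$.

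Then I handle the degenerate case $v=P_V\theta=0$, i.e.\ $\theta\in V^\perp$. Here the polar formula is unavailable, so I use the remark of Said--Bombrun--Berthoumieu that $g^F_{ij}(0)=\frac1n\delta_{ij}$. This gives $g^F_{p_0}(-\theta,w)=-\frac1n\langle\theta,w\rangle=0$ because $\theta\perp V$.

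Finally I interpret the result. The line $t\mapsto tv+(1-t)\theta$ is a $\nabla^{(\mathrm{e})}$-geodesic, and its velocity at $p_v$ is $v-\theta$. Hence the corollary says that this $e$-geodesic meets $\mathcal{M}_V$ $g^F$-orthogonally, consistent with Corollary~\ref{klproj} and Proposition~\ref{eproj}.

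The only step needing care is the case split at the origin. The polar expression of the Hessian is singular there, so it must be replaced by the limiting value $\frac1n\delta_{ij}$. Alternatively, I can cover both cases uniformly by noting that radial symmetry of $\psi$ forces $\mathrm{Hess}\,\psi(v)$ to commute with every orthogonal map fixing $v$. Such a matrix preserves $\mathrm{span}(v)$ and $v^\perp$ and acts as a scalar on $v^\perp$. Since $w=w_\parallel+w_\perp$ with $w_\parallel\in\mathrm{span}(v)$ and $w_\perp\in v^\perp\cap V$, while $X\in V^\perp\subset v^\perp$ is orthogonal to $w_\perp$, the pairing vanishes.
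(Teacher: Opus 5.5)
Your proof is correct, but it takes a different route from the paper.

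\textbf{The paper's route.} The paper argues through the projection machinery. By Proposition~\ref{eproj}, $p_v$ with $v=P_V\theta$ minimizes $\mathrm{KL}(\cdot\|p_\theta)$ on $\mathcal{M}_V$. Since $\mathcal{M}_V$ is $\mcon$-autoparallel (Theorem~\ref{daautoparallel}), Corollary~\ref{klproj}(2) gives an $\econ$-geodesic from $p_\theta$ to $p_v$ that meets $\mathcal{M}_V$ orthogonally. Because $\econ$-geodesics are straight lines in $\theta$, this geodesic is the given line, with velocity $v-\theta$ at $p_v$.

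\textbf{Your route.} You compute $g^F_{p_v}$ directly from the Hessian of the radial potential $\psi$, or, in your uniform version, from its $O(n)$-equivariance. The key observation is that at any $v\in V$ the Fisher metric is a combination of $\langle\cdot,\cdot\rangle$ and $\langle\cdot,\mu\rangle\langle\cdot,\mu\rangle$ with $\mu\in V$.

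\textbf{What each buys.}
\begin{itemize}
\item Your argument is elementary and self-contained. It does not use existence or uniqueness of the minimizer, the doubly autoparallel property, or the ``only if'' direction of the projection theorem.
\item It also proves more. Every $X\in V^\perp$ is $g^F$-orthogonal to $V$, not only $v-\theta$. By a dimension count, the $g^F$-orthogonal complement of $T_{p_v}\mathcal{M}_V$ is exactly $V^\perp$ at every $v\in V$, independently of $\theta$.
\item The cost is the separate treatment of the origin, or the small commutant argument that replaces it. The paper avoids this because Corollary~\ref{klproj} holds at every point.
\item In return, the paper's route presents the statement as an instance of the general Pythagorean/projection picture, which it then reuses in Proposition~\ref{edecom}.
\end{itemize}
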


\begin{proof}
    From Proposition~\ref{eproj}, the function $\mathrm{KL}(\cdot\|p_{\theta})$ attains its minimum at $p_{v}$ on $V$.
    The curve $p_{\gamma(t)}$ where $\gamma(t) = tv + (1-t)\theta,\,t\in\mathbb{R}$ is the $\nabla^{\mathrm{(e)}}$-geodesic from $p_{\theta}$ to $p_{v}$, unique up to affine parameterization, so by Corollary~\ref{klproj} the vector $\left.\frac{d}{dt}p_{\gamma(t)}\right|_{t=1} = v-\theta$ is perdicular to $T_{p_{v}}\mathcal{M}_V$ with respect to $g^F$.
\end{proof}

\vspace{\baselineskip}

\begin{proposition}
    \label{mproj}
    Let $V\in\mathbb{R}^n$ be a linear subspace.
    For an arbitrary $p_{\theta}\in\vmf\setminus\mathcal{M}_V$, the vMF distribution $p_{\xi}\in\mathcal{M}_{V}$ is the unique point on $\mathcal{M}_{V}$ such that $\displaystyle \mathrm{KL}(p_{\theta}\|p_{\xi}) = \mathrm{min}\{\mathrm{KL}(p_{\theta}\|p_{v})\mid v\in V\}$, where
    \begin{align*}
        \xi = A_p^{-1}\left(\frac{A_p(\|\theta\|)}{\|\theta\|}\|P_V\theta\|\right)\frac{P_V\theta}{\|P_V\theta\|},
    \end{align*}
    if $P_{V}\theta \neq 0$.
    Otherwise, $\xi=0$.
\end{proposition}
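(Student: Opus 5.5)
The plan mirrors the proof of Proposition~\ref{eproj}, with the roles of the two coordinate systems interchanged. I read $A_p$ in the statement as $A_n$. Let $p_v$ range over $\mathcal{M}_V$, so that $v\in V$ is the natural coordinate. By \eqref{candi},
\begin{align*}
    \mathrm{KL}(p_\theta\|p_v)=\psi(v)+\varPhi(p_\theta)-\langle v,\eta(p_\theta)\rangle .
\end{align*}
Only the terms $\psi(v)-\langle v,\eta(p_\theta)\rangle$ depend on $v$. Differentiating along $w\in V$ gives
\begin{align*}
    \langle \eta(p_v)-\eta(p_\theta),w\rangle .
\end{align*}
Hence $p_\xi$ with $\xi\in V$ is a critical point if and only if $\eta(p_\xi)-\eta(p_\theta)\in V^\perp$. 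Since $\eta(p_\xi)\in V$ (it is a positive multiple of $\xi$), this is equivalent to
\begin{align*}
    \eta(p_\xi)=P_V\eta(p_\theta)=\frac{A_n(\|\theta\|)}{\|\theta\|}P_V\theta .
\end{align*}

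Next I invert $\eta$ using \eqref{exptonat} together with Lemma~\ref{lem:An_strict_monotonicity}. Because $\eta$ maps $V$ bijectively onto $V\cap B_n^\circ(1)$, there is exactly one such $\xi$. If $P_V\theta=0$, the condition reads $\eta(p_\xi)=0$, so $\xi=0$. Otherwise set $r=\frac{A_n(\|\theta\|)}{\|\theta\|}\|P_V\theta\|$. This satisfies $r\le A_n(\|\theta\|)<1$, so $A_n^{-1}(r)$ is defined. Then
\begin{align*}
    \xi=f_n(r)\,\eta(p_\xi)=\frac{A_n^{-1}(r)}{r}\cdot r\,\frac{P_V\theta}{\|P_V\theta\|}=A_n^{-1}(r)\frac{P_V\theta}{\|P_V\theta\|},
\end{align*}
which is the stated formula. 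The step I expect to need the most care is checking that $\eta(p_\xi)$ really lies in the direction of $P_V\theta$. This holds because $\eta$ preserves the direction of $\theta$ and only rescales its norm via $A_n$.

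For global minimality and uniqueness I apply \eqref{divdecom} with $x=p_\theta$, $y=p_\xi$, $z=p_v$:
\begin{align*}
    \mathrm{KL}(p_\theta\|p_v)=\mathrm{KL}(p_\theta\|p_\xi)+\mathrm{KL}(p_\xi\|p_v)+\langle v-\xi,\eta(p_\xi)-\eta(p_\theta)\rangle .
\end{align*}
The cross term vanishes because $v-\xi\in V$ and $\eta(p_\xi)-\eta(p_\theta)\in V^\perp$. Since $\mathrm{KL}(p_\xi\|p_v)\ge0$, with equality if and only if $v=\xi$, the point $p_\xi$ is the unique minimizer. The case $\xi=0$ is covered by the same argument, since $0\in V$.
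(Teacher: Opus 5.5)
Your proposal is correct and follows the paper's proof step for step. It computes the directional derivative $\langle \eta(p_v)-\eta(p_\theta),w\rangle$, reduces the critical-point condition to $\eta(p_\xi)=P_V\eta(p_\theta)$, inverts via \eqref{exptonat}, and gets global minimality from \eqref{divdecom}. Where the paper only says ``similar to Proposition~\ref{eproj}'' for minimality, you write out the decomposition with $x=p_\theta$, $y=p_\xi$, $z=p_v$; you also check that $r\le A_n(\|\theta\|)<1$ so that $A_n^{-1}(r)$ is defined, and you rightly read the typo $A_p$ as $A_n$.
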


\begin{proof}
    Again, for any $v\in V$ and $w\in T_{p_{v}}\mathcal{M}_V \simeq V$, we define a curve $\gamma(t) = v + tw,\,t\in\mathbb{R}$.
    From the definition of the canonical divergence \eqref{candi}, we have
    \begin{align*}
        \left.\frac{d}{dt}\mathrm{KL}(p_{\theta}\|p_{v + tw})\right|_{t=0} &= \left.\frac{d}{dt}\left(\psi(v + tw) - \langle v + tw,\eta(p_{\theta})\rangle\right)\right|_{t=0}\\
        &= \sum_{i=1}^n\frac{\partial\psi}{\partial\theta^i}(v)\frac{d\theta^i\circ\gamma}{dt}(0) - \langle w,\eta(p_{\theta})\rangle\\
        &= \langle\eta(p_v)-\eta(p_{\theta}),w\rangle.
    \end{align*}
    Here, since $\eta(p_v)\in V\cap B_n^{\circ}(1)$, $p_{\xi}\in\vmf$ gives the extremum of $\mathrm{KL}(p_{\theta}\|\cdot)$ if and only if $\eta(p_{\xi}) = P_{V}\eta(p_{\theta})$.
    The minimality of $\mathrm{KL}(p_{\theta}\|p_{\xi})$ is obtained by a similar proof of Proposition \ref{eproj}.
    Lastly, we have
    \begin{align*}
        \eta(p_{\xi}) &= P_{V}\eta(p_{\theta})\\
        &= \frac{A_n(\|\theta\|)}{\|\theta\|}P_{V}\theta.
    \end{align*}
    The expression \eqref{exptonat} gives us the desired formula of $\xi$.
\end{proof}

\vspace{\baselineskip}

\subsection{Application}
The projection by $\econ$-geodesic on doubly autoparallel manifolds in the vMF manifold has a simple form.
Thus, we are able to decompose the KL divergence $\mathrm{KL}(p_{\theta_1}\|p_{\theta_2})$ into parallel and perpendicular components.\\
\begin{proposition}
    \label{edecom}
    Let $p_{\kappa_1\mu_1},p_{\kappa_2\mu_2}\in\vmf$ with $\kappa_1\neq 0$.
    For $p_{l}\in\vmf$ where $l = \kappa_2\langle\mu_1,\mu_2\rangle\mu_1$, we have
    \begin{align*}
        \mathrm{KL}(p_{\kappa_1\mu_1}\|p_{\kappa_2\mu_2}) = \mathrm{KL}(p_{\kappa_1\mu_1}\|p_{l}) + \mathrm{KL}(p_{l}\|p_{\kappa_2\mu_2}).
    \end{align*}
    Particularly, if $l = \kappa_1\mu_1$, we have
    \begin{align*}
        \mathrm{KL}(p_{\kappa_1\mu_1}\|p_{\kappa_2\mu_2}) = \log\frac{C_n(\kappa_1)}{C_n(\kappa_2)}.
    \end{align*}
\begin{figure}[htbp]
    \centering
    \includegraphics[width=0.85\textwidth]{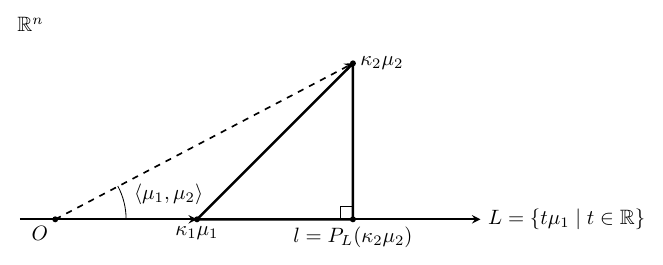}
    \caption{Geometric picture of the decomposition of the KL divergence in Proposition~\ref{edecom}:
the projection $l=P_L(\kappa_2\mu_2)$ of $\kappa_2\mu_2$ onto the line $L=\{t\mu_1\mid t\in\mathbb{R}\}$, and the right angle at $l$ with respect to $g^F$.}
    \label{fig:kl_decomposition}
\end{figure}
\end{proposition}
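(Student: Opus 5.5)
Let $\theta_1=\kappa_1\mu_1$, $\theta_2=\kappa_2\mu_2$, and let $L=\{t\mu_1\mid t\in\mathbb{R}\}$, a one-dimensional linear subspace. By Theorem~\ref{daautoparallel}, $\mathcal{M}_L$ is doubly autoparallel, and the projection of $\theta_2$ onto $L$ is $P_L\theta_2=\kappa_2\langle\mu_1,\mu_2\rangle\mu_1=l$. By Proposition~\ref{eproj}, $p_l$ is the unique minimizer of $\mathrm{KL}(\cdot\|p_{\theta_2})$ on $\mathcal{M}_L$. Rather than invoke the Pythagorean Theorem~\ref{genpy} through geodesics, I will apply the exact three-point identity \eqref{divdecom} with $x=p_{\theta_1}$, $y=p_l$, $z=p_{\theta_2}$, where the $\theta$-coordinates are the natural parameters themselves:
\begin{align*}
\mathrm{KL}(p_{\theta_1}\|p_{\theta_2})=\mathrm{KL}(p_{\theta_1}\|p_l)+\mathrm{KL}(p_l\|p_{\theta_2})+\langle\theta_2-l,\ \eta(p_l)-\eta(p_{\theta_1})\rangle .
\end{align*}

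The key step is to show that the cross term vanishes. First, $\theta_2-l=\theta_2-P_L\theta_2\in L^\perp$. Next, $\eta(p_{\theta_1})=\frac{A_n(\kappa_1)}{\kappa_1}\theta_1=A_n(\kappa_1)\mu_1\in L$, which uses $\kappa_1\neq0$ so that $\mu_1$ is defined. Finally, $\eta(p_l)=\frac{A_n(\|l\|)}{\|l\|}\,l\in L$, with the convention $\eta(p_0)=0$ covering the case $l=0$. Hence $\eta(p_l)-\eta(p_{\theta_1})\in L$, the Euclidean inner product is zero, and the first identity follows. This is exactly the argument in the proof of Proposition~\ref{eproj}, with the point $p_v$ there replaced by $p_{\theta_1}\in\mathcal{M}_L$. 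Geometrically, the $\econ$-geodesic from $p_{\theta_2}$ to $p_l$ is $g^F$-orthogonal to $\mathcal{M}_L$ by Corollary~\ref{klcor}, and the $\mcon$-geodesic from $p_{\theta_1}$ to $p_l$ stays inside $\mathcal{M}_L$, so this is the setting of Theorem~\ref{genpy}.

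For the particular case, assume $l=\kappa_1\mu_1$. Then $\mathrm{KL}(p_{\theta_1}\|p_l)=0$, so $\mathrm{KL}(p_{\theta_1}\|p_{\theta_2})=\mathrm{KL}(p_l\|p_{\theta_2})$. Evaluating \eqref{vmfkl} gives
\begin{align*}
\mathrm{KL}(p_{\theta_1}\|p_{\theta_2})=\log\frac{C_n(\kappa_1)}{C_n(\kappa_2)}+A_n(\kappa_1)\bigl(\kappa_1-\kappa_2\langle\mu_1,\mu_2\rangle\bigr).
\end{align*}
The condition $l=\kappa_1\mu_1$ means $\kappa_2\langle\mu_1,\mu_2\rangle=\kappa_1$, so the second term vanishes and the claimed formula remains.

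I do not expect a real obstacle. The only care needed is in the degenerate cases: $l=0$ (where $\eta(p_0)=0$ still lies in $L$) and $\kappa_2=0$ (where $\mu_2$ is arbitrary but $l=0$ regardless).
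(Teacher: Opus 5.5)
Your proof is correct, but it takes a more direct route than the paper. The paper argues geometrically in three steps:
\begin{enumerate}
\item Proposition~\ref{eproj} identifies $p_l$ as the minimizer of $\mathrm{KL}(\cdot\|p_{\kappa_2\mu_2})$ on $\mathcal{M}_L$.
\item Corollary~\ref{klcor} then gives $g^F$-orthogonality at $p_l$ between the $\econ$-geodesic to $p_{\kappa_2\mu_2}$ and $\mathcal{M}_L$, which contains the $\mcon$-geodesic from $p_{\kappa_1\mu_1}$.
\item The decomposition is read off from the generalized Pythagorean theorem, Theorem~\ref{genpy}.
\end{enumerate}

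You instead insert the three points into the exact identity \eqref{divdecom}. You kill the cross term by a purely Euclidean observation: $\theta_2-l\in L^\perp$ in natural coordinates, while $\eta(p_l)-\eta(p_{\theta_1})\in L$ in expectation coordinates. This is the same mechanism that underlies Theorem~\ref{genpy} (through \eqref{dualness}). As you note, it is literally the identity from the global-minimality step in the proof of Proposition~\ref{eproj}, with $V=L$, $v=\theta_1$, $\theta=\theta_2$. Consequently, your appeals to Theorem~\ref{daautoparallel} and to the minimizing property of $p_l$ are not actually used.

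What your version buys is economy and uniformity. No geodesics or metric orthogonality are needed. The degenerate configurations with $\theta_2\in L$ (i.e.\ $\kappa_2=0$ or $\mu_2=\pm\mu_1$) are covered without a separate remark; there Corollary~\ref{klcor} as stated does not apply, since it assumes $\theta\notin V$. What the paper's version buys is the interpretation of the identity as a $g^F$-right triangle formed by an $\mcon$-geodesic and an $\econ$-geodesic. Your handling of the particular case $l=\kappa_1\mu_1$ is the same direct evaluation of \eqref{vmfkl} that the paper uses.
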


\begin{proof}
    Define a line $L\subset\mathbb{R}^n$ by $L=\{t\mu_1\mid t\in\mathbb{R}\}$.
    The submanifold $\mathcal{M}_{L} = \{p_v\mid v\in L\}\subset\vmf$ is doubly autoparallel, so the function $\mathrm{KL}(\cdot\|p_{\kappa_2\mu_2})$ attains the minimum at $l = P_L(\kappa_2\mu_2) = \kappa_2\langle\mu_1,\mu_2\rangle\mu_1$ on $\mathcal{M}_L$ by Proposition~\ref{eproj}.
    By Corollary~\ref{klcor}, the tangent vector $l - \kappa_2\mu_2$ is perpendicular to $\kappa_1\mu_1$ at $l$ with respect to $g^F$, so from the general Pythagorean theorem (Theorem~\ref{genpy}) we obtain the decomposition.
    If $\kappa_1\mu_1 = l = \kappa_2\langle\mu_1,\mu_2\rangle\mu_1$ holds, the equation follows from the formula \eqref{vmfkl} of the KL divergence.
\end{proof}

\vspace{\baselineskip}

Since the dualistic structure of the vMF manifold are invariant under orthogonal transformations of \(\mathbb{R}^n\), we may identify any \(m\)-dimensional linear subspace \(W\subset\mathbb{R}^n\) with \(\mathbb{R}^m\) via a linear isometry.
Under this identification, the dualistic structure \((g,\nabla,\nabla^*)\) induced on \(\mathcal{M}_W\) by \((g^F,\econ,\mcon)\) does not, in general, coincide with the dualistic structure on \(\mathcal{M}^m_{\mathrm{vMF}}\). 
Indeed, the potential $\Psi=\psi\circ\theta$ of $g$ with respect to $\nabla$ is given by the cumulant generating function $\psi$ restricted to $\mathbb{R}^m$, which is different from that of $\mathcal{M}^m_{\mathrm{vMF}}$.
In particular, at the origin, the respective Fisher metrics are given by
\[
g_{p_0}=\frac{1}{n}\langle\cdot,\cdot\rangle,
\qquad
g^{F,m}_{p_{0}}=\frac{1}{m}\langle\cdot,\cdot\rangle.
\]
Thus, these structures do not coincide under the natural identification when \(m<n\).
For each positive integer \(l\), let \(\mathrm{KL}^{l}\) denote the Kullback–Leibler divergence on \(\mathcal{M}^{l}_{\mathrm{vMF}}\). 
Consequently, for \(\theta_1,\theta_2\in W\simeq\mathbb{R}^m\), the divergences \(\mathrm{KL}^{n}(p_{\theta_1}^{n}\|p_{\theta_2}^{n})\) and \(\mathrm{KL}^{m}(p_{\theta_1}^{m}\|p_{\theta_2}^{m})\) need not be equal, where the superscripts distinguish the corresponding vMF families.\\

\begin{theorem}
    Let $V\subset\mathbb{R}^n$ be an $(m-1)$-dimensional linear subspace and let $\theta\in\mathbb{R}^n\setminus V$.
    Set $W=V\oplus\mathbb{R}\theta$ and identify $W$ with $\mathbb{R}^m$ via a linear isometry.
    Define
    \[
        \mathcal{M}_V^{(n)}
        =\{p_v^{n}\mid v\in V\},
        \qquad
        \mathcal{M}_V^{(m)}
        =\{p_v^{m}\mid v\in V\}.
    \]
    Then the unique minimizers of $\mathrm{KL}^n(\cdot\|p_\theta^{n})$ on $\mathcal{M}_V^{(n)}$ and $\mathrm{KL}^m(\cdot\|p_\theta^{m})$ on $\mathcal{M}_V^{(m)}$ are $p_\xi^{n}$ and $p_\xi^{m}$, respectively, where $\xi=P_V\theta$.
\end{theorem}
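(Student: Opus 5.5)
The plan is to apply Proposition~\ref{eproj} twice, once in the ambient family $\vmf$ and once in the lower-dimensional family $\mathcal{M}^m_{\mathrm{vMF}}$. The key observation is that the minimizer in Proposition~\ref{eproj} is $\xi=P_V\theta$. This point depends only on the Euclidean structure of the natural coordinate $\theta$, and not on the dimension-dependent cumulant generating function $\psi$.

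\textbf{The $n$-dimensional claim.} This is immediate. Since $V\subset\mathbb{R}^n$ is a linear subspace and $\theta\in\mathbb{R}^n$, Proposition~\ref{eproj} applies verbatim. It says that $p^n_\xi$ with $\xi=P_V\theta$ is the unique minimizer of $\mathrm{KL}^n(\cdot\|p^n_\theta)$ on $\mathcal{M}^{(n)}_V=\mathcal{M}_V$.

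\textbf{The $m$-dimensional claim.} Fix a linear isometry $\iota:W\to\mathbb{R}^m$. Since $\theta\in W$ and $V\subset W$, the family $\mathcal{M}^{(m)}_V$ corresponds to the doubly autoparallel submanifold $\mathcal{M}_{\iota(V)}$ of $\mathcal{M}^m_{\mathrm{vMF}}$, by Theorem~\ref{daautoparallel} applied in dimension $m$. Proposition~\ref{eproj}, applied in $\mathcal{M}^m_{\mathrm{vMF}}$, then gives the unique minimizer $\iota(\xi')=P_{\iota(V)}\iota(\theta)$.

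\textbf{Matching the two minimizers.} It remains to check that $\xi'=P_V\theta$, where $P_V$ is the orthogonal projection in $\mathbb{R}^n$. Since $\iota$ is an isometry, $P_{\iota(V)}\circ\iota=\iota\circ P^W_V$, where $P^W_V$ is the orthogonal projection inside $W$. Because $\theta\in W$ and $V\subset W$, the decomposition $\theta=P_V\theta+(\theta-P_V\theta)$ has $\theta-P_V\theta\in W\cap V^\perp$. Hence $P^W_V\theta=P_V\theta=\xi$, so both minimizers are $p_\xi$.

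\textbf{Where care is needed.} The only delicate point is that the proof of Proposition~\ref{eproj} uses the identity $\partial\varphi/\partial\eta_i=\theta^i$ and the decomposition \eqref{divdecom}. These hold in any dimension because $\mathcal{M}^m_{\mathrm{vMF}}$ is again an exponential family with natural coordinate $\theta\in\mathbb{R}^m$ and Euclidean pairing $\langle\theta,\eta\rangle$. So the differing potentials ($\psi$ in dimension $n$ versus $m$) affect only the value of the minimal divergence, not the location $P_V\theta$.

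I would also note that the hypothesis $\theta\notin V$ only guarantees $\dim W=m$. In particular, $\mathcal{M}^m_{\mathrm{vMF}}$ is well defined for $m\ge 2$; if $m=1$ one would need the convention for the circle being replaced by two points, which I would either exclude or handle by the same computation.
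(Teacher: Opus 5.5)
Your proposal is correct and is the same argument as the paper's: apply Proposition~\ref{eproj} once in $\vmf$ and once in $\mathcal{M}^m_{\mathrm{vMF}}$. Your check that the orthogonal projection onto $V$ inside $W$ agrees with $P_V\theta$ under the linear isometry (since $\theta-P_V\theta\in W\cap V^\perp$) is a step the paper leaves implicit, and you handle it correctly.
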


\begin{proof}
    By Proposition \ref{eproj}, the unique minimizer of $\mathrm{KL}^n(\cdot\|p_\theta^{n})$ on $\mathcal{M}_V^{(n)}$ is $p_\xi^{n}$, where $\xi=P_V\theta$.
    Applying Proposition \ref{eproj} to $\mathcal{M}^m_{\mathrm{vMF}}$, we obtain $p_\xi^{m}$ as the unique minimizer of $\mathrm{KL}^m(\cdot\|p_\theta^{m})$ on $\mathcal{M}_V^{(m)}$.
\end{proof}

\vspace{\baselineskip}

\section*{Acknowledgement}
The authors would like to express their sincere gratitude to Dr. Esra S. Sevim for her valuable insights, constructive feedback, and expert advice throughout the course of this work, which significantly improved the quality of this paper.

\vspace{-1pt}
\bibliography{sn-bibliography}

\end{document}